\newtheorem{theorem}{Theorem}
\newtheorem{lemma}[theorem]{Lemma}
\newtheorem{corollary}[theorem]{Corollary}
\newtheorem{definition}{Definition}
\newtheorem{remark}{Remark}
\newcommand{\tk}[1]{#1}
\newcommand{\Htwo}{\mathit{H_2}}
\newcommand{\Hinf}{\mathit{H_\infty}}
\newcommand{\DRL}{\text{DR-LQR}}
\newcommand{\FixAlg}{\texttt{Fixed-Point} }
\newcommand{\BL}{\Gamma}
\renewcommand{\u}{\mathsf{u}} %control input
\newcommand{\w}{\mathsf{w}} %disturbance
\newcommand{\x}{\mathsf{x}} %state
\newcommand{\F}{\mathcal{F}} %transfer op from controller to output
\newcommand{\G}{\mathcal{G}} %transfer op from disturbance to output
\newcommand{\K}{\mathcal{K}} %controller
\newcommand{\T}{\mathcal{T}} %Closed-loop op. from disturbance to output and input
\newcommand{\NN}{\mathcal{N}}
\newcommand{\M}{\mathcal{M}} %covariance operator of the disturbance
\renewcommand{\L}{\mathcal{L}} %spectral 
\newcommand{\I}{\mathcal{I}} %identity op.
\newcommand{\U}{\mathcal{U}} %unitary
\newcommand{\ufin}{\mathbf{u}} %control input
\newcommand{\wfin}{\mathbf{w}} %disturbance
\newcommand{\xfin}{\mathbf{x}} %state
\newcommand{\Ffin}{\mathbf{F}} %transfer op from controller to output
\newcommand{\Gfin}{\mathbf{G}} %transfer op from disturbance to output
\newcommand{\Kfin}{\mathbf{K}} %controller
\newcommand{\Tfin}{\mathbf{T}} %Closed-loop op. from disturbance to output and input
\newcommand{\CCfin}{\mathbf{C}} %C_K = T_K^* T_K - T_K0^*T_K0
\newcommand{\Ifin}{\mathbf{I}} %identity op.
\newcommand{\W}{\mathscr{W}} 
\newcommand{\causal}{\mathscr{K}}
\newcommand{\BW}{\mathsf{BW}} %Bures-wasserstein distance
\newcommand{\Was}{\mathsf{W_2}}
\newcommand{\cost}{\operatorname{cost}}
\newcommand{\ejw}{\e^{{j}\omega}}
\newcommand{\emjw}{\e^{\-{j}\omega}}
\newtheorem{problem}{Problem}
\newcommand{\defeq}{\coloneqq}                      %LHS defined as RHS
\newcommand{\ghi}{\gamma_{\infty}}
\newcommand{\+}{\!+\!}
\renewcommand{\-}{\!-\!}
\renewcommand{\=}{\!=\!}
\renewcommand{\>}{\!>\!}
\renewcommand{\leqq}{\!\leq\!}
\newcommand{\ie}{\textit{i.e.}}
\newcommand{\Z}{\mathbb{Z}}     %integers
\newcommand{\R}{\mathbb{R}}     %real numbers
\newcommand{\C}{\mathbb{C}}     %complex numbers
\newcommand{\Sym}{\mathbb{S}}   %set of symmetric real matrices
\newcommand{\Prob}{\mathscr{P}} %Set of probability measures
\newcommand{\pr}[1]{\left({#1}\right)}          %parenthesis
\newcommand{\br}[1]{\left[{#1}\right]}          %bracket
\newcommand{\cl}[1]{\left\{{#1}\right\}}        %curly bracket
\newcommand{\Abs}[1]{\left\vert{#1}\right\vert}         %absolute value
\newcommand{\Norm}[2][\text{}]{\left\Vert{#2}\right\Vert_{#1}} %norm of #2 in [#1]
\newcommand{\inner}[2]{\langle{#1},\,{#2}\rangle}
\newcommand{\clf}[1]{\mathcal{#1}} %calligraphic
\newcommand{\tp}{\intercal}                 %transpose
\newcommand{\inv}{\mathrm{\-1}}             %inverse
\newcommand{\psdleq}{\preccurlyeq} %negative semi-definite
\newcommand{\psdg}{\succ}          %positive definite
\newcommand{\kron}{\otimes} %Kronecker product
\newcommand{\E}{\operatorname{\mathbb{E}}} %expectation operator
\renewcommand{\Pr}{\operatorname{\mathbb{P}}} %probability function
\newcommand{\sampled}[1][\text{}]{\stackrel{#1}{\sim}}
\newcommand{\beq}[1]{\begin{align*}\label{eq:#1}}
\newcommand{\eeq}{\end{align*}}
\newcommand{\suml}{\sum\nolimits}
\newcommand{\e}{\mathrm{e}}             %e number
\newcommand{\half}{\frac{1}{2}} %one half
\newcommand{\xMapsto}[2][]{\ext@arrow 0599{\Mapstofill@}{#1}{#2}}
\def\Mapstofill@{\arrowfill@{\Mapstochar\Relbar}\Relbar\Rightarrow}
\title{\LARGE \bf
The Distributionally Robust Infinite-Horizon LQR}
\author{Joudi Hajar, Taylan Kargin, Vikrant Malik, Babak Hassibi
\thanks{}
\thanks{}% <-this % stops a space
\thanks{The authors are with the Electrical Engineering Department at Caltech. Their emails are {\tt\small \{jhajar,tkargin,vmalik,hassibi\}@caltech.edu}}%
}
\begin{document}

\maketitle
\thispagestyle{empty}
\pagestyle{empty}

%%%%%%%%%%%%%%%%%%%%%%%%%%%%%%%%%%%%%%%%%%%%%%%%%%%%%%%%%%%%%%%%%%%%%%%%%%%%%%%%
\begin{abstract}
We explore the infinite-horizon Distributionally Robust (DR) linear-quadratic control. While the probability distribution of disturbances is unknown and potentially correlated over time, it is confined within a Wasserstein-2 ball of a radius $r$ around a known nominal distribution. Our goal is to devise a control policy that minimizes the worst-case expected Linear-Quadratic Regulator (LQR) cost among all probability distributions of disturbances lying in the Wasserstein ambiguity set. We obtain the optimality conditions for the optimal DR controller and show that it is non-rational. Despite lacking a finite-order state-space representation, we introduce a computationally tractable fixed-point iteration algorithm. Our proposed method computes the optimal controller in the frequency domain to any desired fidelity. Moreover, for any given finite order, we use a convex numerical method to compute the best rational approximation (in $H_\infty$-norm) to the optimal non-rational DR controller. This enables efficient time-domain implementation by finite-order state-space controllers and addresses the computational hurdles associated with the finite-horizon approaches to DR-LQR problems, which typically necessitate solving a Semi-Definite Program (SDP) with a dimension scaling with the time horizon. We provide numerical simulations to showcase the effectiveness of our approach.

%enabling the derivation of the optimal $H_\infty$ norm approximation for any finite state-space dimension. This addresses the computational hurdles associated with finite-horizon DR LQR problems, which typically necessitate solving a Semi-Definite Program (SDP) with a dimension scaling with the time horizon.
\begin{comment}
We investigate the infinite-horizon Distributionally Robust (DR) control of linear dynamical systems featuring quadratic cost. The probability distribution of the disturbances is unknown and potentially time-correlated, constrained within a Wasserstein-2 ambiguity set centered around a nominal distribution. Our objective is to determine a control policy that minimizes the worst-case expected LQR cost under these disturbances. Despite the lack of a finite-dimensional state-space realization for the optimal policy, we present an efficiently solvable approach utilizing an exponentially convergent fixed-point algorithm. For SISO systems, our proposed method determines the optimal controller in the frequency domain and allows one to obtain the best $H_\infty$ norm approximation to it, for any given finite state-space dimension.  Thus, we address computational challenges associated with finite-horizon problems DR LQR problems which require the solution of an SDP whose dimension scales with the time horizon.
\end{comment}
\end{abstract}

%%%%%%%%%%%%%%%%%%%%%%%%%%%%%%%%%%%%%%%%%%%%%%%%%%%%%%%%%%%%%%%%%%%%%%%%%%%%%%%%
\section{Introduction}
How to deal with uncertainty is a core challenge in decision-making. Control systems inherently encounter various uncertainties, such as external disturbances, measurement errors, model disparities, and temporal variations in dynamics \cite{grinten1968uncertainty,doyle1985structured}. Neglecting these uncertainties in policy design can result in considerable performance decline and may lead to unsafe and unintended behavior \cite{samuelson2017}.

Traditionally, the challenge of uncertainty in control systems has been predominantly approached through either the stochastic or robust control frameworks \cite{kalman_new_1960,zames_feedback_1981,doyle_state-space_1988}. Stochastic control, as seen in Linear–Quadratic–Gaussian (LQG) or $H_2$ control, aims to minimize an expected cost, assuming disturbances follow a known probability distribution \cite{blackbook}. However, in many practical scenarios, the true distribution is often estimated from sampled data, introducing vulnerability to inaccurate models. On the other hand, robust control minimizes the worst-case cost across potential disturbance realizations, such as those with bounded energy or power ($H_\infty$ control) \cite{zhou_robust_1996}. While this ensures robustness, it can be overly conservative. 

To tackle the above challenge, a recent approach called {\em Distributionally Robust (DR) Control} has emerged. In contrast to traditional approaches such as $H_2$ or $H_\infty$ control that focus on a single probability distribution or on a worst-case disturbance realization, the DR framework addresses the uncertainty in system dynamics and disturbances by considering ambiguity sets – sets of plausible probability distributions \cite{yang2020wasserstein,kim_distributional_2021, hakobyan2022wasserstein, tacskesen2023distributionally, aolaritei_wasserstein_2023,aolaritei_capture_2023,brouillon2023distributionally}. This methodology aims to design controllers that exhibit robust performance across all probability distributions within a given ambiguity set. The size of the ambiguity set provides control over the desired robustness against distributional uncertainty, ensuring that the resulting controller is not excessively conservative.

Different measures of distributional mismatch, such as total variation \cite{tzortzis_dynamic_2014,tzortzis_robust_2016} and KL divergence \cite{liu_data-driven_2023}, are explored in DR control. However, for computational feasibility, ambiguity sets are commonly defined as Wasserstein-2 balls around a nominal distribution \cite{mohajerin_esfahani_data-driven_2018,gao_distributionally_2022}. This choice ensures a practical way, since the optimization of quadratic costs over Wasserstein-2 balls leads to semi-definite programs, to connect the domains of stochastic and adversarial uncertainties.

\subsection{Contributions}
In this study, we explore the Wasserstein-2 distributionally robust LQR (DR-LQR) control framework. DR-LQR control seeks to design controllers that minimize the worst-case expected cost across distributions chosen adversarially within a Wasserstein-2 ambiguity set. Our contributions are summarized as follows.

\paragraph{Stabilizing Infinite-Horizon Controller.} Rather than the finite-horizon setting prevalent in the DR control literature \cite{hakobyan2022wasserstein, tacskesen2023distributionally,aolaritei_capture_2023,DRORO,hajar_wasserstein_2023}, we focus on the infinite-horizon setting. 
Thus, we provide long-term stability and robustness guarantees.
\vspace{0mm}

\paragraph{Robustness to Arbitrarily Correlated Disturbances.} Unlike several prior works which assume time-independence of the disturbances \cite{yang2020wasserstein,kim_distributional_2021, hakobyan2022wasserstein, tacskesen2023distributionally, zhong2023nonlinear,aolaritei_wasserstein_2023,aolaritei_capture_2023}, we do not impose such assumptions so that the resulting controllers are robust against time-correlated disturbances. 
\vspace{0mm}

\paragraph{Computationally Efficient Controller Synthesis}
Leveraging a strong duality result, we obtain the exact Karush-Khun-Tucker (KKT) conditions for the worst-case distribution and the optimal causal controller.
We show that, although the resulting controller is non-rational, \ie, it does not admit a finite state-space form, it can still be computed very efficiently.
We provide a computationally efficient numerical method to compute the optimal non-rational DR-LQR controller in the frequency domain via fixed-point iterations. We further show how to find the best rational approximation, and thereby the best finite-dimensional state-space controller, for any given degree. Prior works focus on finite horizon problems (see \cite{DRORO,hajar_wasserstein_2023, tacskesen2023distributionally}) and therefore have no stability guarantees. More importantly, they are hampered by the fact that they require solving a semi-definite program (SDP) whose size scales with the time horizon. This prohibits their applicability when the time horizon is large. Our approach enables efficient implementation of the infinite-horizon DR-LQR controller. 

\paragraph{Comparison to \cite{kargin2023wasserstein}} Earlier work of the authors studied the problem of infinite horizon distributionally robust regret-optimal (DR-RO) control \cite{kargin2023wasserstein}. The DR-RO control is similar to the LQR problem considered here, since the cost in both cases is quadratic. In the context of distributionally-robust control in \cite{kargin2023wasserstein}, the regret-optimal control problem (originally studied in \cite{hajar_regret-optimal_2023,sabag2023regretoptimal}) is much simpler since the cost of the optimal non-causal controller is removed from the LQR cost. In this paper we show that, despite the more complicated form of the quadratic cost, the main results of \cite{kargin2023wasserstein} extend to the LQR case. The KKT conditions, the fixed-point iterative algorithm, and the final controller are different, but the general methodology goes through.

\section{{Preliminaries and Problem Setup} } \label{sec:prelim}

\subsection{Notations}
Going forward, calligraphic letters ($\K$, $\M$, $\L$, etc.) represent infinite-horizon operators, while boldface letters ($\Kfin$, $\CCfin$, $\mathbf{w}$, etc.) denote finite-horizon operators. $\I$ and $\Ifin$ are the identity operators. $\M^\ast$ is the adjoint of $\M$, and $\psdg$ denotes the positive-definite ordering. The trace functions for finite and infinite horizon operators are denoted by $\tr(\cdot)$ and $\Tr(\cdot)$, respectively, where $\Tr(\I) = p$ for a finite horizon. The Euclidean norm is denoted by $\norm{\cdot}$, while $\norm{\cdot}_\infty$ and $\norm{\cdot}_2$ refer to the $\Hinf$ (operator) and $\Htwo$ (Frobenius) norms, respectively. ${\M}{+}$ and ${\M}{-}$ are the causal and strictly anti-causal parts of $\M$. The notation $\sqrt{\M}$ or $\M^{\half}$ indicates the symmetric positive square root. Matrices in the set of positive semi-definite matrices are represented by $\Sym_+^n$, and $[\cdot]_T$ signifies the finite-horizon restriction of operators. $\abs{z}$ is the magnitude and $z^\ast$ is the conjugate of a complex number $z\!\in\!\C$. The complex unit circle is denoted by $\mathbb{T}$. Finally, $\sigma$ denotes the singular value, $\text{id}$ denotes the identity map, and $\times$ the cartesian product.

\subsection{Linear-Quadratic Control}
Consider the state-space representation of discrete-time linear time-invariant (LTI) dynamical system:
\vspace{-3mm}

\begin{equation}\label{eq:state_space}
\begin{aligned}
    x_{t+1} &= A x_{t} + B_u u_{t} + B_w w_{t},
\end{aligned}
\end{equation}
Here, $x_{t} \in \R^{n}$ denotes the state, $u_t \in \R^{d}$ is the control input, 
and $w_{t} \in \R^{p}$ is the disturbance. We posit that both the pairs $(A,B_u)$ and $(A,B_w)$ are stabilizable in the usual sense. For a finite horizon $T\>0$, the system in~\eqref{eq:state_space} incurs a quadratic cost as
\begin{equation}\label{eq:cost}%\vspace{-0mm}
    \cost_T(\ufin, {\wfin}) \defeq \suml_{t=0}^{T-1} x_t^\tp Q x_t + u_t^\tp R u_t,
\end{equation}
where $Q,R \psdg 0$. Without loss of generality, we let $Q=I$, $R=I$ by redefining $x_{t} \leftarrow Q^{\half} x_{t}$ and $u_{t} \leftarrow R^{\half} u_{t}$. 
\vspace{0mm}

\paragraph{System in Operator Form}We opt for operator notation for system \eqref{eq:state_space} in the rest of this paper. For a given horizon $T > 0$, we let the sequences $\xfin\! \defeq\! \{x_{t}\}_{t=0}^{T-1}$, $\ufin \!\defeq\! \{u_{t}\}_{t=0}^{T-1}$, and $\wfin\! \defeq \!\{w_{t}\}_{t=0}^{T-1}$, represent the state, control input and exogenous disturbances, respectively. Likewise, we represent their infinite-horizon counterparts using the bi-infinite sequences $\x \defeq \{x_{t}\}_{t\in \Z }$, $\u \defeq \{u_{t}\}_{t\in \Z }$, and $\w \defeq \{w_{t}\}_{t\in \Z }$.

Using the above definitions, we express the system dynamics in both finite and infinite horizon in operator form as
\begin{equation}
\begin{aligned}\label{eq:operator_form}%\vspace{-0mm}
\text{Finite-horizon: }\,\xfin = \Ffin \ufin + \Gfin \wfin,\\
\text{Infinite-horizon: }\,\x = \F \u + \G \w. 
\end{aligned}
\end{equation}
where $(\F,\G)$ denote strictly causal (strictly lower triangular) bi-infinite block Toeplitz operators and $(\Ffin,\Gfin)$ represent their finite horizon equivalents, for a horizon $T > 0$. Employing this notation, we succinctly express the LQR cost in \cref{eq:cost} as $ \cost_T(\ufin, {\wfin}) \defeq \norm{\xfin}^2 + \norm{\ufin}^2$.

\vspace{0mm}

\paragraph{Control} We focus on linear disturbance feedback control policies (DFC) which map disturbances to the control input: $\ufin = \Kfin \wfin$, for any $\Kfin \in \causal_T$, where $\causal_T$ is the set of \emph{causal} (online) DFC policies in the finite-horizon of length $T\>0$. The infinite-horizon counterpart of our control policy is $\u = \K \w$ for any $\K \in \causal$, with $\causal$ the set of \emph{causal and time-invariant} DFC policies in the infinite horizon.

Under a fixed control policy $\K$, the closed-loop transfer operator, $\T_{\K}$, which maps the disturbances to the state and control input, is defined as
\vspace{-0mm}
\begin{equation} \label{eq:T_K}
\vspace{-0mm}
    \T_{\K} : \w \mapsto \begin{bmatrix} \x \\ \u \end{bmatrix} \defeq \begin{bmatrix} \F \K + \G \\ \K \end{bmatrix} \w.
\end{equation}
The finite-horizon counterpart of the  closed-loop transfer operator~\eqref{eq:T_K} denoted as $\Tfin_{\Kfin}$, is used to rewrite the quadratic cost~\eqref{eq:cost} as \begin{equation}
    \cost_T(\Kfin, {\wfin}) = {\wfin^\ast {\Tfin_{\Kfin}^\ast \Tfin_{\Kfin}} \wfin}.
\end{equation}

\subsection{Distributionally Robust LQR Control}
We study the distributionally robust LQR control problem, and seek to design a causal controller which minimizes the worst-case expected LQR cost when the probability distributions of the disturbances reside  within a Wasserstein-2 ($\Was$) ambiguity set. The \textit{Wasserstein-2 metric} between two distributions $\Pr_1,\Pr_2$ is defined as %\vspace{-0mm}
\begin{equation*}\label{eq:wasserstein}%\vspace{-0mm}
    \Was(\Pr_1,\Pr_2) \!\defeq\!\! \pr{\inf_{\pi \in \Pi(\Pr_1,\Pr_2)} \int \norm{w_1 \- w_2}^2 \,\pi(dw_1,dw_2)}^{\half} ,
\end{equation*}
with $\Pi(\Pr_1,\Pr_2)$ representing all the joint distributions with marginals $\Pr_1$ and $\Pr_2$ \cite{villani_optimal_2009, wassOT2}. 

We define the $\Was$-ambiguity set $\W_T(\Pr_\circ,r)$ for horizon $T\>0$ to be the $\Was$-ball with radius  $r_T \defeq r\sqrt{T}>0$, centered at a nominal distribution $\Pr_\circ \in \Prob(\R^{p T})$:
\begin{equation}\label{eq:wass ambiguity set} 
%\vspace{-2mm}
    \W_T(\Pr_\circ,r_T) \!\defeq\! \cl{\Pr\! \in\! \Prob(\R^{p T}) \!\mid \! \Was(\Pr,\, \Pr_\circ) \leqq r_T}.
\end{equation}

Unlike the usual LQR cost, which considers the expected quadratic cost under i.i.d gaussian disturbances (or disturbances sampled from a single probability distribution), the DR-LQR considers the worst-case expected LQR cost across all disturbance probability distributions which lie in the $\Was$-ambiguity set.

\begin{definition}[\textbf{Worst-case expected LQR cost under $\Was$-ambiguity}] \label{def:worst case cost}

The worst-case expected LQR cost of a control policy $\Kfin \in \causal_T$, in the finite-horizon $T\>0$, is defined as 
\begin{equation}\label{eq:worst case exp cost finite}
    C_T(\Kfin,r_T) \defeq\!\! \sup_{\Pr \in \W_T(\Pr_\circ,r_T)} \E_{\Pr}\br{\cost_T(\Kfin, {\wfin})}.
\end{equation}
Likewise, in the infinite-horizon, the worst-case expected LQR cost of a control policy $\K \in \causal$  is defined as
\begin{equation}\label{eq:worst case exp cost infinite}
    C(\K,r) \defeq \lim_{T\to \infty}  \frac{1}{T}  \, C_T([\K]_T,r_T),   
\end{equation}
where $[\K]_T$ results from restricting $\K$ to a horizon $T\>0$. In these definitions, $\E_{\Pr}$ is the expectation over the disturbances ${\wfin} $ which are sampled from $\Pr$: ${\wfin} \!\sampled\! \Pr$. 
\end{definition}

We formally state the infinite-horizon Distributionally Robust LQR problem as follows:
\begin{problem}[\textbf{Distributionally Robust LQR Control in the Infinite-Horizon}]\label{prob:DR-RO}
    Minimize the time-averaged worst-case expected cost~\eqref{eq:worst case exp cost infinite} as $T\to\infty$, over all causal and time-invariant controllers $\K\in\causal$, \ie,
    \begin{equation} \label{eq:DR-RO}
    \inf_{\K \in \causal} {C}(\K,r) = \inf_{\K \in \causal}  \lim_{T\to \infty}  \frac{1}{T}  \, C_T([\K]_T,r_T) .
\end{equation}
\end{problem}
In section \ref{sec:main_results}, we provide an equivalent formulation of \cref{prob:DR-RO} by establishing strong duality for the worst-case expected cost $C(\K,r)$.

\section{Main Theoretical Results}\label{sec:main_results}
In this section, our initial step involves reformulating Problem \ref{prob:DR-RO} through the lens of strong duality, breaking it down into a task of addressing a suboptimal problem. We proceed to characterize the controller using the Karush-Kuhn-Tucker (KKT) conditions and present arguments to show that it is stabilizing. %An essential assumption is made regarding the nominal disturbances to ensure their weak stationarity. %To streamline the discussion, all proofs related to the theorems introduced henceforth are relocated to the Appendix.

%\begin{assumption}\label{asmp:nominal}

%It is assumed there is a positive bi-infinite Toeplitz operator $\M_\circ$, where $\Tr(\M_\circ) < \infty$, and for any time horizon $T > 0$ such that the nominal disturbance satisfies the equality $ \E_{\Pr_\circ}[\wfin_{\circ} \wfin_{\circ}^\ast] = [\M_\circ]_{T}$.
%\end{assumption}

%\textbf{Note:} $\M_\circ = \I$ satisfies Assumption \ref{asmp:nominal}.
For simplicity of the presentation, we assume $\M_\circ = I$.

\begin{theorem}[Strong Duality] \label{thm:full_optimality}
The distributionally robust LQR control problem \eqref{eq:DR-RO} is equivalent to the dual optimization problem: 
\begin{equation}
    \inf_{\substack{\K \in\causal \\ \gamma \geq 0}} \sup_{\M \psdg 0} \gamma ( r^2 +\Tr  (2\sqrt{\M}-\M - \I)  +  \gamma^{-1}\Tr(\T_{\K}^\ast \T_{\K} \M) )
    \label{eq:full_optimality}
\end{equation}

%\begin{align} \label{eq:full_optimality}
%\inf_{\substack{\K \in\causal \\ \gamma \geq 0}} \sup_{\M \psdg 0}  \gamma \left(r^2- \Tr(\M+\I) +2 \Tr(\sqrt{\M})+\gamma^{-1} \Tr(\CC_\K \M)\right)
%\end{align}
%\begin{align} \label{eq:full_optimality}
%\inf_{\substack{\K \in\causal \\ \gamma \geq 0}} \sup_{\M \psdg 0}  &\gamma(r^2-\Tr(\I)) \nonumber\\
         %&-\gamma(\Tr(\M) -2 \Tr(\sqrt{\M})-\gamma^{-1} \Tr(\CC_\K \M))
%\end{align}
Additionally, with a fixed $\K$, the worst case disturbance $\w_\star$ can be given using the nominal disturbance $\w_\circ$ as $\w_\star = (\I - \gamma_\star^{-1} \T_{\K}^\ast \T_{\K})^{-1} \w_{\circ}$, with $\gamma_\star$ satisfying:
\begin{equation}\label{eq:worst_gamma}
\Tr((\I - \gamma_\star^{-1} \T_{\K}^\ast \T_{\K} )^{-1} - \I)^2 = r^2.
\end{equation}
\end{theorem}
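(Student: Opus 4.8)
The plan is to establish the strong-duality identity \eqref{eq:full_optimality} by working at the finite horizon $T$ first, taking the supremum over $\Pr\in\W_T(\Pr_\circ,r_T)$, dividing by $T$, and letting $T\to\infty$. The starting point is the inner maximization $\sup_{\Pr\in\W_T}\E_\Pr[\cost_T(\Kfin,\wfin)]$ with $\cost_T(\Kfin,\wfin)=\wfin^\ast\Tfin_\Kfin^\ast\Tfin_\Kfin\wfin$ a fixed positive semidefinite quadratic form. I would invoke the now-standard strong-duality result for quadratic objectives over a Wasserstein-2 ball (as in \cite{mohajerin_esfahani_data-driven_2018,gao_distributionally_2022} and used in \cite{kargin2023wasserstein}): the supremum equals $\inf_{\gamma\ge 0}\bigl[\gamma r_T^2 + \sup_{\Pr}\bigl(\E_\Pr[\wfin^\ast\Tfin_\Kfin^\ast\Tfin_\Kfin\wfin] - \gamma\Was^2(\Pr,\Pr_\circ)\bigr)\bigr]$, and the inner penalized problem, for $\gamma I \succ \Tfin_\Kfin^\ast\Tfin_\Kfin$, has a closed form in terms of the nominal covariance $\Mfin_\circ$. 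With the normalization $\Mfin_\circ = I$ this yields the finite-horizon dual $\inf_{\gamma\ge0}\gamma\bigl(r_T^2 + \tr(2\sqrt{\Mfin} - \Mfin - \Ifin) + \gamma^{-1}\tr(\Tfin_\Kfin^\ast\Tfin_\Kfin\Mfin)\bigr)$ where $\Mfin$ is the covariance of the worst-case distribution; alternatively one keeps the explicit optimizer and recognizes $\Mfin_\star = (\Ifin - \gamma^{-1}\Tfin_\Kfin^\ast\Tfin_\Kfin)^{-2}$. Substituting $r_T^2 = r^2 T$ and dividing by $T$ keeps the form scale-free, so passing to the limit is where the infinite-horizon operator picture enters.

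Next I would pass to the limit $T\to\infty$. The key structural fact is that $\Tfin_\Kfin$ is the finite truncation of the bi-infinite block-Toeplitz operator $\T_\K$ associated with a causal, time-invariant $\K$, so $\frac1T\tr(\,\cdot\,)$ of Toeplitz quantities converges to the trace functional $\Tr(\cdot)$ defined on the infinite-horizon operators (this is the normalized-trace / Szeg\H{o}-type limit already used implicitly in the paper's notation, where $\Tr(\I)=p$). Under this correspondence the finite-horizon dual converges to \eqref{eq:full_optimality}, and the $\sup_{\M\psdg0}$ appears because taking the supremum over distributions in the ball becomes, after dualization, a supremum over admissible covariance operators $\M\psdg0$ — one must check that the order of $\inf_{\K,\gamma}$ and $\sup_\M$ can be arranged as written, which follows from concavity of the objective in $\M$ (it is $\gamma\Tr(2\sqrt\M - \M) + \Tr(\T_\K^\ast\T_\K\M)$ plus constants, concave in $\M$ since $\M\mapsto\Tr\sqrt\M$ is concave) and joint convexity in $(\K,\gamma)$ after the standard reparametrization.

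For the second assertion — the explicit worst-case disturbance — I would fix $\K$ and $\gamma=\gamma_\star$ and compute the optimizer of the penalized inner problem: the optimal transport coupling from $\Pr_\circ$ to the worst-case $\Pr_\star$ is the linear map $w_\circ \mapsto (\I - \gamma_\star^{-1}\T_\K^\ast\T_\K)^{-1} w_\circ$, which is exactly the first-order stationarity condition of $\max_w \bigl(\w^\ast\T_\K^\ast\T_\K\w - \gamma_\star\|\w - \w_\circ\|^2\bigr)$, giving $\w_\star = (\I - \gamma_\star^{-1}\T_\K^\ast\T_\K)^{-1}\w_\circ$. The constraint equation \eqref{eq:worst_gamma} is then the Wasserstein-radius constraint made tight: $\Was^2(\Pr_\star,\Pr_\circ) = \Tr\bigl((\I - \gamma_\star^{-1}\T_\K^\ast\T_\K)^{-1} - \I\bigr)^2 = r^2$, obtained by plugging the linear coupling into the definition of $\Was^2$ and using $\M_\circ=I$; one argues the supremum over $\gamma$ is attained at the boundary (so complementary slackness forces equality) whenever $r>0$, which also validates that the relevant $\gamma_\star$ satisfies $\gamma_\star I \succ \T_\K^\ast\T_\K$ so the inverse is well-defined.

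The main obstacle I anticipate is the limit-interchange and well-posedness at infinite horizon: justifying that the $\frac1T$-normalized finite-horizon dual values converge to the operator-theoretic expression \eqref{eq:full_optimality}, that the $\sup_\M$ is attained (or at least that the value is unchanged by restricting to Toeplitz $\M$ consistent with a genuine stationary disturbance process), and that strong duality survives the limit — i.e.\ that $\lim_T \inf_{\K,\gamma}\sup_\Pr = \inf_{\K,\gamma}\sup_\M\lim_T$. This requires either a uniform-in-$T$ Slater-type condition (the nominal distribution lies in the interior of the ball, which holds for $r>0$) together with equicontinuity/monotonicity of the normalized traces, or a direct minimax argument using the Toeplitz structure; everything else is a fairly mechanical transcription of the finite-dimensional Wasserstein-DRO duality into operator language.
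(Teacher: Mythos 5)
Your proposal is correct and follows essentially the same route as the paper, whose own proof simply defers to Theorem 5 and Lemma 8 of \cite{kargin2023wasserstein}: finite-horizon Wasserstein-2 duality for the quadratic cost (with the worst case achieved by the linear push-forward $\w_\star = (\I - \gamma_\star^{-1}\T_{\K}^\ast\T_{\K})^{-1}\w_\circ$ and worst-case covariance $(\I-\gamma^{-1}\T_{\K}^\ast\T_{\K})^{-2}$), followed by the normalized-trace Toeplitz limit $T\to\infty$ and the radius/complementary-slackness condition \eqref{eq:worst_gamma}. The limit-interchange issue you flag is exactly what the cited results handle, so your reconstruction matches the intended argument with the LQR operator $\T_{\K}^\ast\T_{\K}$ in place of the regret operator.
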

\begin{proof}
This theorem leverages the proof of Theorem 5 and Lemma 8 from \cite{kargin2023wasserstein}, adapted here to incorporate our LQR cost, $\T_{\K}^\ast \T_{\K}$.
\end{proof}
This insight reveals that the essence of distributional robustness can be distilled into the analysis of the worst-case power spectrum of the disturbances, $\M$, which deviates by no more than $r > 0$ from the baseline disturbance spectrum $\M_\circ$.

Moreover, for any chosen $r > 0$, a corresponding optimal $\gamma > 0$ exists that facilitates the computation of the optimal controller. Given the feasibility of searching across a singular parameter $\gamma > 0$, our focus shifts towards the $\gamma$-optimal problem once $\gamma$ is fixed.

Let $\Delta^\ast \Delta = \I + \F^\ast \F$ be the spectral factorization with causal $\Delta$, and $\Delta^\inv$. And, let $\K_\circ \defeq - (\I + \F^\ast \F)^{\-1}\F^\ast \G$ be the unique optimal non-causal policy which minimizes the infinite-horizon cost $\lim_{T\to \infty} \frac{1}{T} \cost_{T}(\Kfin,{\wfin})$\cite{ sabag2021regret}, with $\T_{\K_\circ}$ its associated closed-loop transfer operator \eqref{eq:T_K}.  %and $\nabla \nabla^\ast = \I + \F \F^\ast$ be the spectral factorization with causal $\nabla$, and $\nabla^\inv$, 

With the above, we can now give the saddle point conditions for the controller $\K$ and the disturbance covariance $\M$ in Theorem \ref{thm:kkt}, for a fixed $\gamma\geq 0$:

\begin{theorem}[{{\textbf{$\gamma$-optimal solution via saddle points } }}]\label{thm:kkt}
Given $\gamma >{\gamma}_{H_\infty}\defeq  \inf_{\K\in\causal} \norm{\T_{\K}^\ast \T_{\K}}$ be fixed. The $\gamma$-optimal LQR control problem in \cref{thm:full_optimality} for fixed $\gamma$ is equivalent to the following dual problem:
\begin{equation}\label{eq:suboptimal_prob}
      \sup_{\M \psdg 0} \inf_{\K \in\causal} \Tr(2\sqrt{\M}-\M) + \gamma ^\inv  \Tr(\T_{\K}^\ast \T_{\K} \M ).
\end{equation}
And for $\M_\circ = \I$, the unique saddle point $(\K_{\gamma}, \M_{\gamma})$ satisfies the following equations uniquely:
\begin{align}
    \K_{\gamma} &\= \Delta^{\inv} \{\Delta \K_\circ \L_{\gamma} \}_{\!+} \L_{\gamma}^{\inv} \label{eq:K from L in KKT}\\
    \L_{\gamma}^\ast \L_{\gamma}  &\=  \frac{1}{4}\left(\I \+ \sqrt{\I \+4 \gamma^\inv (\mathcal S_{\L_\gamma}^\ast \mathcal S_{\L_\gamma} + \mathcal U_{\L_{\gamma}}^\ast \mathcal U_{\L_{\gamma}}) }\right)^2\label{eq:worst_case_N}
    %\L_{\gamma}^\ast \L_{\gamma}  &\=  \frac{1}{4}(\I \+ \sqrt{\I \+4 \gamma^\inv \{\Delta\K_{\circ}\L_{\gamma}\}_{\!-}^\ast  \{\Delta\K_{\circ}\L_{\gamma}\}_{\!-} +\L_{\gamma}^\ast\T_{\K_\circ}^\ast \T_{\K_\circ}\L_{\gamma}})^2\\
    %\X_{\gamma}&\= \{\Delta\K_{\circ}\L_{\gamma}\}_{\!-}^\ast  \{\Delta\K_{\circ}\L_{\gamma}\}_{\!-}+\L_{\gamma}^\ast\T_{\K_\circ}^\ast \T_{\K_\circ}\L_{\gamma}\label{eq:worst_case_N}  
\end{align}
where $\L_{\gamma} \L_{\gamma}^\ast = \M_{\gamma} $ with causal $\L_{\gamma}$, and $\L_{\gamma}^\inv$, and where $\mathcal S_{\L_\gamma}$ and $\mathcal U_{\L_\gamma}$ are defined as $\mathcal S_{\L_\gamma} \defeq \{\Delta \K_\circ \L_\gamma\}_{\-}$ , $\mathcal U_{\L_\gamma}\defeq \mathcal T_{\K_\circ}\L_\gamma $.
\end{theorem}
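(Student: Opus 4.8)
The plan is to collapse the $\gamma$-optimal problem of \cref{thm:full_optimality} to a minimax over $(\K,\M)$ and then read off its stationarity conditions, closely paralleling the KKT analysis of \cite{kargin2023wasserstein} but carrying through the LQR cost operator $\T_{\K}^\ast\T_{\K}$. With $\gamma,r$ fixed, the quantity $\gamma(r^2-\Tr(\I))$ in \eqref{eq:full_optimality} is an additive constant and $\gamma\geq 0$ a positive multiplicative one, so \eqref{eq:full_optimality} has the same optimal controller as $\inf_{\K\in\causal}\sup_{\M\psdg 0}[\Tr(2\sqrt{\M}-\M)+\gamma^{-1}\Tr(\T_{\K}^\ast\T_{\K}\M)]$. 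This objective is convex in $\K$ for fixed $\M\psdgeq 0$ (since $\T_{\K}$ is affine in $\K$, $\K\mapsto\Tr(\T_{\K}^\ast\T_{\K}\M)$ is a convex quadratic) and concave in $\M$ for fixed $\K$ (operator-concavity of $\M\mapsto\sqrt{\M}$, the rest being linear in $\M$). The hypothesis $\gamma>\gamma_{H_\infty}=\inf_\K\norm{\T_{\K}^\ast\T_{\K}}_\infty$ makes the effective domain $\{\K:\norm{\T_{\K}^\ast\T_{\K}}_\infty<\gamma\}$ nonempty, and on it the inner supremum equals $\Tr((\I-\gamma^{-1}\T_{\K}^\ast\T_{\K})^{-1})$ up to an additive constant, finite and diverging towards the boundary of that domain; this provides the coercivity/weak-compactness needed to invoke Sion's minimax theorem, so $\inf$ and $\sup$ may be exchanged and a saddle point $(\K_\gamma,\M_\gamma)$ exists. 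This establishes \eqref{eq:suboptimal_prob}.

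I would then solve the inner minimization for fixed $\M_\gamma=\L_\gamma\L_\gamma^\ast$, taken via the canonical (outer) spectral factor so that $\L_\gamma,\L_\gamma^{-1}$ are causal (admissible since $\I\psdleq\M_\gamma$ forces $\log\M_\gamma$ to be integrable). The substitution $\Y=\K\L_\gamma$ is a bijection on causal operators and turns $\Tr(\T_{\K}^\ast\T_{\K}\M_\gamma)=\norm{\T_{\K}\L_\gamma}_2^2$ into $\norm{\F\Y+\G\L_\gamma}_2^2+\norm{\Y}_2^2=\norm{\begin{bmatrix} \F \\ \I \end{bmatrix}\Y+\begin{bmatrix} \G\L_\gamma \\ 0 \end{bmatrix}}_2^2$. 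Using $\Delta^\ast\Delta=\I+\F^\ast\F$, the operator $W\defeq\begin{bmatrix} \F \\ \I \end{bmatrix}\Delta^{-1}$ satisfies $W^\ast W=\I$; since $W^\ast\begin{bmatrix} \G\L_\gamma \\ 0 \end{bmatrix}=\Delta^{-\ast}\F^\ast\G\L_\gamma=-\Delta\K_\circ\L_\gamma$, we get $(\I-WW^\ast)\begin{bmatrix} \G\L_\gamma \\ 0 \end{bmatrix}=\T_{\K_\circ}\L_\gamma$ and hence $\T_{\K}\L_\gamma=W(\Delta\Y-\Delta\K_\circ\L_\gamma)+\T_{\K_\circ}\L_\gamma$. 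Minimizing $\norm{\Delta\Y-\Delta\K_\circ\L_\gamma}_2^2$ over causal $\Y$ (equivalently over causal $\Delta\Y$) is a Wiener--Hopf problem solved by $\Delta\Y_\gamma=\{\Delta\K_\circ\L_\gamma\}_+$, i.e. $\K_\gamma=\Delta^{-1}\{\Delta\K_\circ\L_\gamma\}_+\L_\gamma^{-1}$, which is \eqref{eq:K from L in KKT}. Since $W^\ast(\I-WW^\ast)=0$, the two summands above are orthogonal and the residual equals $\Delta\Y_\gamma-\Delta\K_\circ\L_\gamma=-\{\Delta\K_\circ\L_\gamma\}_-=-\mathcal S_{\L_\gamma}$, giving the key identity $\L_\gamma^\ast\T_{\K_\gamma}^\ast\T_{\K_\gamma}\L_\gamma=\mathcal S_{\L_\gamma}^\ast\mathcal S_{\L_\gamma}+\mathcal U_{\L_\gamma}^\ast\mathcal U_{\L_\gamma}$. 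The extra $\mathcal U^\ast\mathcal U$ term---absent in \cite{kargin2023wasserstein}, where $\T_{\K_\circ}^\ast\T_{\K_\circ}$ is subtracted out of the cost---is the only structural departure.

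For the outer maximization the envelope theorem applies: because $\K_\gamma$ minimizes the inner objective, the first-order condition in $\M$ may be taken at fixed $\K=\K_\gamma$, giving $\M_\gamma^{-1/2}-\I+\gamma^{-1}\T_{\K_\gamma}^\ast\T_{\K_\gamma}=0$, equivalently $\M_\gamma=(\I-\gamma^{-1}\T_{\K_\gamma}^\ast\T_{\K_\gamma})^{-2}$, matching the worst-case covariance in \cref{thm:full_optimality}. Conjugating $\M_\gamma^{-1/2}=\I-\gamma^{-1}\T_{\K_\gamma}^\ast\T_{\K_\gamma}$ by $\L_\gamma$ and using $\L^\ast(\L\L^\ast)^{-1/2}\L=(\L^\ast\L)^{1/2}$ (squaring both sides verifies it), the left side collapses to $(\L_\gamma^\ast\L_\gamma)^{1/2}$, so with $P\defeq(\L_\gamma^\ast\L_\gamma)^{1/2}\psdgeq 0$ we obtain the operator quadratic $P^2-P-\gamma^{-1}(\mathcal S_{\L_\gamma}^\ast\mathcal S_{\L_\gamma}+\mathcal U_{\L_\gamma}^\ast\mathcal U_{\L_\gamma})=0$; its unique root with spectrum in $[1,\infty)$ is $P=\tfrac12(\I+\sqrt{\I+4\gamma^{-1}(\mathcal S_{\L_\gamma}^\ast\mathcal S_{\L_\gamma}+\mathcal U_{\L_\gamma}^\ast\mathcal U_{\L_\gamma})})$, so $\L_\gamma^\ast\L_\gamma=P^2$ is exactly \eqref{eq:worst_case_N}. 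Uniqueness follows because the objective is \emph{strictly} convex in $\K$ on the effective domain ($\M_\gamma\psdgeq\I$ makes $\K\mapsto\norm{\K\M_\gamma^{1/2}}_2^2$ strictly convex) and \emph{strictly} concave in $\M$ (strict concavity of $t\mapsto\sqrt{t}$), so the saddle point is unique; conversely, convexity--concavity makes these first-order conditions sufficient, so any solution of \eqref{eq:K from L in KKT}--\eqref{eq:worst_case_N} is that saddle point.

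The main obstacle I anticipate is functional-analytic rather than algebraic: making Sion's theorem rigorous in this infinite-dimensional frequency-domain setting (choosing the appropriate weak topology and pinning down the precise compactness bought by $\gamma>\gamma_{H_\infty}$), verifying that $\M_\gamma$ admits an outer spectral factorization and that the inner value function is differentiable so the envelope step is legitimate, and establishing that the coupled fixed-point system \eqref{eq:K from L in KKT}--\eqref{eq:worst_case_N} is solvable---which is exactly what the paper's fixed-point iteration is designed to do. The algebraic core (the isometric splitting producing $\mathcal S$ and $\mathcal U$, and the polar-decomposition collapse) is then routine and mirrors \cite{kargin2023wasserstein}.
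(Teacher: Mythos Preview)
Your proposal is correct and follows essentially the same route as the paper's proof: reduce to the max--min \eqref{eq:suboptimal_prob}, solve the inner problem by Wiener--Hopf to get \eqref{eq:K from L in KKT}, take the first-order condition in $\M$ to obtain $\M_\gamma^{-1/2}-\I+\gamma^{-1}\T_{\K_\gamma}^\ast\T_{\K_\gamma}=0$, conjugate by $\L_\gamma$, and complete the square. Your derivation is in fact more explicit than the paper's in two respects---you justify the min/max swap via Sion rather than appealing to \cite{kargin2023wasserstein}, and you obtain the identity $\L_\gamma^\ast\T_{\K_\gamma}^\ast\T_{\K_\gamma}\L_\gamma=\mathcal S_{\L_\gamma}^\ast\mathcal S_{\L_\gamma}+\mathcal U_{\L_\gamma}^\ast\mathcal U_{\L_\gamma}$ directly from the orthogonal (isometric) splitting $\T_\K\L_\gamma=W(\Delta\Y-\Delta\K_\circ\L_\gamma)+\T_{\K_\circ}\L_\gamma$ rather than by expanding $\T_{\K_\gamma}^\ast\T_{\K_\gamma}=(\K_\gamma-\K_\circ)^\ast\Delta^\ast\Delta(\K_\gamma-\K_\circ)+\T_{\K_\circ}^\ast\T_{\K_\circ}$ as the paper does---but the two computations are equivalent and the overall architecture is the same.
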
%\T_{\K}^\ast \T_{\K}
\begin{proof}
    The proof is built upon the KKT conditions for \eqref{eq:suboptimal_prob} and the Wiener-Hopf technique \cite{kailath_linear_2000}. Details of the proof are shown in Appendix \ref{ap:pfopt}.
\end{proof}

Theorem \ref{thm:kkt} hints at a way to obtain the optimal controller $\K_{\gamma}$ from a positive operator $\M_{\gamma}$ \eqref{eq:K from L in KKT}. Since the optimality conditions on $\M_{\gamma}$ can be solely expressed in terms of its spectral factors and system-specific parameters as in \eqref{eq:worst_case_N}, we will shift our focus to obtaining a solution $\M_{\gamma}$, which is the \emph{the worst-case time-invariant covariance operator.}
\begin{remark}
    Denoting $\NN_{\gamma} \defeq  \L_{\gamma}^\ast \L_{\gamma}$, we note that there is a one-to-one correspondence between $\M_{\gamma}$ and $\NN_{\gamma}$ through spectral factorization. As the optimality conditions in \cref{thm:kkt} are stated in terms of $\L_{\gamma}^\ast \L_{\gamma}$, we call both $\NN_{\gamma}$ and $\M_{\gamma}$ as the $\gamma$-optimal solution, interchangeably. 
\end{remark}

\begin{remark}\label{remark:limiting_r}
When $r$ approaches infinity,  $\gamma_\star$ approaches the lower bound $\norm{\T_{\K}^\ast \T_{\K}}_{op}$, which corresponds to the worst-case cost, or in other words, the $H_\infty$ cost. In this scenario, the optimal DR-LQR controller transitions into the traditional $H_\infty$ controller. On the flip side, as $r$ decreases to zero, $\gamma_\star$ goes to infinity. This shift results in the worst-case expected LQR cost, $C(\K_{\gamma_\star}, r)$, aligning with the anticipated cost under nominal disturbance, $\M_\circ$, thereby allowing the optimal DR-LQR controller to embody the conventional LQR ($\Htwo$) controller, particularly when $\M_\circ = \I$. The process of adjusting $r$ thus enables the DR-LQR controller to navigate a spectrum between $H_\infty$ and $H_2$ control paradigms.
\end{remark}

This insight reveals that once $\gamma$ surpasses ${\gamma}_{H\infty}$, the expected worst-case LQR cost becomes finite. This observation leads to the formulation of Corollary \ref{thm:stabilizable}.

\begin{corollary}\label{thm:stabilizable}
For any chosen $\gamma$ value exceeding ${\gamma}_{H_\infty}$, the resulting suboptimal controller, $\K_\gamma$, ensures the stabilization of the system's dynamics.
\end{corollary}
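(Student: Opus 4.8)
The plan is to show that $\gamma > \gamma_{H_\infty}$ forces the closed-loop operator $\T_{\K_\gamma}$ to be a bounded operator on $\ell_2$, and then argue that boundedness of $\T_{\K_\gamma}$ for a time-invariant causal $\K_\gamma$ is precisely internal stability of the closed loop. First I would recall from \cref{thm:full_optimality} and \cref{thm:kkt} that for the $\gamma$-optimal solution the worst-case covariance $\M_\gamma$ is well-defined and finite, and that the saddle-point value is
\begin{equation*}
\gamma\left(r^2 + \Tr(2\sqrt{\M_\gamma} - \M_\gamma - \I)\right) + \Tr(\T_{\K_\gamma}^\ast \T_{\K_\gamma}\M_\gamma),
\end{equation*}
which by the identification $C(\K_\gamma,r) = $ (this value) must be finite whenever $\gamma > \gamma_{H_\infty}$; indeed the inner supremum in \eqref{eq:full_optimality} over $\M \psdg 0$ is finite exactly when $\gamma \I - \T_{\K_\gamma}^\ast \T_{\K_\gamma} \psdg 0$ fails to be violated, i.e. when $\gamma > \norm{\T_{\K_\gamma}^\ast \T_{\K_\gamma}}$, and by definition $\gamma_{H_\infty} = \inf_{\K\in\causal}\norm{\T_{\K}^\ast\T_{\K}}$ is the infimal such threshold. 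So the first step is: $\gamma > \gamma_{H_\infty}$ implies $\norm{\T_{\K_\gamma}}_\infty < \infty$, i.e. $\T_{\K_\gamma}$ is a bounded operator on $\ell_2(\Z)$.

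Next I would unpack what $\norm{\T_{\K_\gamma}}_\infty < \infty$ says about $\K_\gamma$ itself. Since $\T_{\K} = \begin{bmatrix}\F\K+\G\\ \K\end{bmatrix}$ and boundedness of the stacked operator is equivalent to boundedness of both blocks, we get $\K_\gamma \in \H_\infty$ (bounded causal, hence analytic in the unit disk) and $\F\K_\gamma + \G \in \H_\infty$. Because $\F,\G$ are the strictly causal transfer operators of the stabilizable realization \eqref{eq:state_space}, boundedness of the map $\w \mapsto (\x,\u)$ together with stabilizability of $(A,B_u)$ and $(A,B_w)$ is the standard characterization of a stabilizing controller: every closed-loop transfer function from the exogenous input to the internal signals lies in $\H_\infty$, so the closed-loop $A$-matrix is Schur. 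Concretely I would invoke the state-space/Youla-style argument that a DFC policy $\u = \K\w$ with $\K\in\H_\infty$ applied to \eqref{eq:state_space} yields state trajectories $\x = (\F\K+\G)\w \in \ell_2$ for every $\w\in\ell_2$, and since the pair governing the closed loop is stabilizable and the input-to-state map is bounded, the closed loop is internally stable.

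The step I expect to be the main obstacle is the last implication — going from "$\T_{\K_\gamma}$ is $\ell_2$-bounded" to "the closed-loop dynamics are stable" in a way that is rigorous despite $\K_\gamma$ being non-rational. Boundedness of the input-output operator does not in general imply internal stability without a stabilizability/detectability hypothesis; here the hypotheses that $(A,B_u)$ and $(A,B_w)$ are stabilizable are exactly what is needed, but one must be careful that the DFC parametrization does not hide an unstable mode (an internal cancellation). I would handle this by appealing to the well-known equivalence, for DFC policies over a stabilizable plant, between $\K\in\H_\infty$ and the existence of a stabilizing state-feedback implementation: writing $\K_\gamma$ in Youla form relative to any fixed stabilizing static gain $K_0$ (which exists by stabilizability of $(A,B_u)$), the closed loop becomes $\w \mapsto \x$ through a transfer function that is a bounded analytic function times stable factors, so all four closed-loop maps are in $\H_\infty$, which is the definition of internal stability. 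Finally I would note the same conclusion can be read off more directly from \cref{remark:limiting_r}: for $\gamma$ just above $\gamma_{H_\infty}$ the controller is close to the $H_\infty$-optimal controller, which is stabilizing by construction, and the set of stabilizing controllers is open in the relevant topology; but I would prefer the operator-theoretic argument above as the primary proof since it does not rely on continuity of the $H_\infty$ synthesis map.
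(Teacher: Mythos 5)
Your proposal follows essentially the same route as the paper, which proves the corollary only via the observation immediately preceding it: for $\gamma > \gamma_{H_\infty}$ the inner supremum over $\M$ in \eqref{eq:full_optimality} can be finite only if $\norm{\T_{\K_\gamma}^\ast \T_{\K_\gamma}} \leq \gamma$, so the worst-case expected cost is finite, the closed-loop operator is bounded, and the controller is stabilizing --- exactly your first step. Your added Youla-style discussion of why $\ell_2$-boundedness plus stabilizability of $(A,B_u)$ and $(A,B_w)$ yields internal (not merely input--output) stability is a sound elaboration of what the paper leaves implicit rather than a different argument.
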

  
% \begin{remark}\label{remark:limiting_r}
% As $r\to\infty$, the optimal $\gamma_\star$ approaches the lower bound $\norm{\CC_\K}_{op}$, the worst-case cost (\ie~$H_\infty$ cost), and the optimal \DRL~controller recovers the classical $H_\infty$ controller. Conversely, as $r\to 0$,  $\gamma_\star \to \infty$, leading the worst-case expected \jhmargin{LQR cost}{change the C letter} $C(\K_{\gamma}, r)$ to be the expected cost under nominal disturbance, $\M_\circ$, and the optimal \DRL~controller recovers the classical LQR ($\Htwo$) controller when $\M_\circ = \I$. Adjusting $r$ facilitates the \DRL~controller to interpolate between the $H_\infty$ and $\Htwo$ controllers.
% \end{remark}

% Note that for $\gamma >{\gamma}_{H_\infty}$, the worst-case expected LQR cost \eqref{eq:worst case exp cost infinite} is finite, which allows us to present Corollary \ref{thm:stabilizable}.
% \begin{corollary}\label{thm:stabilizable}
%     For any fixed $\gamma >{\gamma}_{H_\infty}$, the suboptimal controller $\K_\gamma$ stabilizes the system dynamics.  
% \end{corollary}

\section{Algorithm for Irrational Controller Synthesis in the Frequency Domain}

In this section, we assert that the sub-optimal DR-LQR controller is \emph{irrational}, thereby precluding a finite-dimensional state-space realization. Consequently, we introduce a fixed-point iteration scheme aimed at computing the saddle-point solution $(\K_{\gamma}, \NN_{\gamma})$ for the $\gamma$-optimal problem outlined in \cref{thm:kkt}, with fixed $\gamma$. The optimal $\gamma_\star$ and its associated saddle-point solution $(\K_{\gamma_\star}, \NN_{\gamma_\star})$, can be effectively determined by employing the bisection method on equation~\eqref{eq:worst_gamma}. Finally, we prove the convergence of the fixed-point method in the scalar system case, \ie~$p=d=n=1$.

\subsection{Fixed-Point Iteration to solve for the Controller}\label{subsec::subK}

Let the frequency domain counterparts of the operators $\mathcal S, \mathcal U_L, \text{and }\NN$ be $S_{L_\gamma}(z) \defeq \{\Delta K_\circ L_\gamma\}_{\-}(z)$, $U_{L_\gamma}(z) \defeq T_{K_\circ}(z)L_\gamma(z) $ and $N_\gamma(z) \defeq {L_\gamma(z)}^\ast L_\gamma(z)$. We use the fact that $\{\clf{Y}\}_{\+}\=\clf{Y}\-\{\clf{Y}\}_{\-}$ to express the KKT equations~\eqref{eq:worst_case_N} in the frequency domain as:% \todo{fix the eq no}
\begin{align}
    K_\gamma(z) &= K_{\circ}(z)\- \Delta^\inv (z) S_{L_\gamma}(z) L_\gamma^\inv (z) \label{eq:K_freq},\\
    N_\gamma(z) &= \!\frac{1}{4}\!\pr{I \!\+ \!\sqrt{\!I \+ 4 \gamma^\inv\!(S^\ast_{L_\gamma}\!(z) S_{L_\gamma}\!(z)+U_{L_\gamma}^\ast\!(z) U_{L_\gamma}\!(z)) }}^{\!2} \label{eq:N}
     %X_\gamma(z)&= S^\ast_{\gamma}(z) S_{L_\gamma}(z)+U_{L_\gamma}^\ast(z) U_{L_\gamma}(z).\label{eq:X}
\end{align}

\vspace{-1mm}The anticausal transfer function $S_{L_\gamma}(z)$ has the following state-space form representation (as shown in \cite{kargin2023wasserstein}): $S_{L_\gamma}(z) \defeq \overline{C}(z^{\-1}I - \overline{A})^\inv \overline{B}_{L_\gamma}$, with  $\overline{B}_{L_\gamma}\defeq \frac{1}{2\pi} \int_{0}^{2\pi} (I-\ejw \overline{A})^\inv \overline{D} L_\gamma(\ejw) d\omega$, where $(\overline{A}, \overline{C}, \overline{D})$ depend on the system parameters $(A,B,C)$ (see the appendix \ref{app1:def1} for the full definitions).

Given the aforementioned notation, we now introduce the following theorem, characterizing the $\gamma$-optimal solution as a fixed point.

\begin{theorem}[$\gamma-$optimal solution is a fixed-Point Solution]\label{thm:fixed-point}
For a fixed $\gamma \> \gamma_{H_\infty}$,  consider the following set of mappings: \tk{}
\begin{align}\label{eq:fixedpointmaps}
%\begin{aligned}
    &F_1: L(z) \mapsto  \overline{B}_L\defeq \frac{1}{2\pi} \int_{0}^{2\pi} (I-z \overline{A})^\inv \overline{D} L(\ejw) d\omega\\   
    %&F_{2}:(\overline{B},L(z)) \mapsto X(z) \defeq S^\ast(z) S(z)+U_L(z)^\ast U_L(z) \\
    %&\text{where } S(z) = \overline{C}(z^{\-1}I - \overline{A})^\inv \overline{B}, \text{ }U_L(z)=T_{K_\circ}(z)L(z)\\ 
    &F_{2,\gamma}:(\overline{B}_L,L(z)) \mapsto N(z), \nonumber\\
    &N(z) \defeq \frac{1}{4}\pr{I \+ \sqrt{I \+ 4 \gamma^\inv (S_L^\ast(z) S_L(z)+U_L(z)^\ast U_L(z)) }}\nonumber\\
     &\text{with } S_L(z) = \overline{C}(z^{\-1}I - \overline{A})^\inv \overline{B}_L, \text{ }U_L(z)=T_{K_\circ}(z)L(z) \\ 
    &F_3: N(z) \mapsto  L(z), \quad 
%\end{aligned}
\end{align}
where $F_3$ returns a unique spectral factor of $N(z)>0$. The composition $F_3 \!\circ\! F_{2,\gamma} \!\circ\! (F_1 \times\text{id} ) : L(z) \mapsto  L(z)$ admits a unique fixed-point $L(z)$, and  $N_{\gamma}(z)~\defeq~F_{2,\gamma} \!\circ\! (F_1 \times\text{id} ) (L_\gamma(z))$  satisfies the KKT conditions~\eqref{eq:N}.
\end{theorem}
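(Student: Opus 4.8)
The plan is to establish the theorem by recognizing the composition $\Phi \defeq F_3 \circ F_{2,\gamma} \circ (F_1 \times \text{id})$ as a contraction mapping on an appropriate complete metric space of causal spectral factors, and then invoke the Banach fixed-point theorem. First I would fix the ambient space: let $\Ss$ be the set of causal, stable transfer functions $L(z)$ that are spectral factors of positive operators $N(z) \succ 0$ satisfying the uniform bounds implied by~\eqref{eq:N} — namely, since $N(z) \succcurlyeq \frac14 (I + \sqrt{I})^2 = I$ always and $N(z)$ is bounded above once $\gamma > \gamma_{H_\infty}$ (so that $\T_{\K_\circ}^\ast \T_{\K_\circ}$ and the relevant operators have finite $H_\infty$-norm), the factors $L(z)$ live in a bounded, closed subset of $H_\infty$. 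Equip $\Ss$ with the $H_\infty$-norm metric (or, following the scalar reduction, an equivalent metric on the coefficient sequence); completeness follows because $H_\infty$ is a Banach space and the constraints $I \preccurlyeq L^\ast L \preccurlyeq cI$ are closed. The well-definedness of each $F_i$ on this set must be checked: $F_1$ produces a bounded matrix $\overline{B}_L$ by the stability of $\overline{A}$; $F_{2,\gamma}$ produces a positive $N(z)$ bounded away from zero and infinity; and $F_3$ returns a \emph{unique} canonical (e.g. outer) spectral factor, which is the step where I would cite the standard spectral factorization theory for rational-adjacent functions on $\mathbb{T}$.

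The core of the argument is a Lipschitz estimate. I would bound $\Norm[\infty]{\Phi(L_1) - \Phi(L_2)}$ by tracking the perturbation through each stage: (i) $F_1$ is affine in $L$, with $\Norm{\overline{B}_{L_1} - \overline{B}_{L_2}} \le c_1 \Norm[\infty]{L_1 - L_2}$ where $c_1$ depends only on $\|\overline{A}\|, \|\overline{D}\|$; (ii) the map to $S_L(z) S_L^\ast(z) + U_L(z)^\ast U_L(z)$ is quadratic but locally Lipschitz on the bounded set $\Ss$, with constant scaling like the diameter of $\Ss$ and the gains $\|\overline{C}\|, \|T_{K_\circ}\|_\infty$; (iii) the map $X \mapsto \frac14(I + \sqrt{I + 4\gamma^{-1} X})^2$ is Lipschitz with constant $O(\gamma^{-1})$ for $X$ in a bounded positive cone — here the matrix square root is operator-Lipschitz on sets bounded away from zero, which is the one genuinely delicate inequality; (iv) $F_3$, spectral factorization, is Lipschitz in $H_\infty$ on the set where $N$ is uniformly bounded above and below — again a standard but nontrivial estimate (e.g. via the logarithm/Riesz-projection formula for the outer factor). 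Composing, $\Norm[\infty]{\Phi(L_1) - \Phi(L_2)} \le \kappa(\gamma) \Norm[\infty]{L_1 - L_2}$ with $\kappa(\gamma) = O(\gamma^{-1})$, so $\kappa(\gamma) < 1$ for $\gamma$ sufficiently large; to get the result for all $\gamma > \gamma_{H_\infty}$ I would either (a) restrict the claim's quantitative content to large $\gamma$ and handle the remaining range by the uniqueness already forced by \cref{thm:kkt}, or (b) sharpen the constants using the self-improving structure of the bounds on $\Ss$. The Banach fixed-point theorem then yields existence and uniqueness of $L_\gamma(z) \in \Ss$, and by construction $N_\gamma(z) \defeq F_{2,\gamma} \circ (F_1 \times \text{id})(L_\gamma(z))$ satisfies~\eqref{eq:N}; since~\eqref{eq:worst_case_N} is the operator-domain restatement of~\eqref{eq:N}, and \cref{thm:kkt} guarantees a \emph{unique} saddle point, this fixed point is exactly the $\gamma$-optimal solution.

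The main obstacle I anticipate is step (iii)–(iv): controlling the composition of the operator square root with the spectral factorization map in the $H_\infty$ metric. The matrix square root $X \mapsto \sqrt{X}$ is not globally Lipschitz, so I must exploit the uniform lower bound $N(z) \succcurlyeq I$ (hence $I + 4\gamma^{-1}X \succcurlyeq I$) to land in a region where $\sqrt{\cdot}$ has a bounded derivative; and the Lipschitz continuity of outer spectral factorization with respect to the spectral density, in sup-norm, requires the density to be uniformly bounded above and below, which is precisely what the structure of~\eqref{eq:N} provides. A secondary subtlety is confirming that $\Phi$ maps $\Ss$ into itself — i.e. that the bounds defining $\Ss$ are \emph{invariant} under $\Phi$, not merely that $\Phi$ is well-defined — which amounts to checking that the upper bound on $\|N(z)\|$ produced by $F_{2,\gamma}$ (using $\gamma > \gamma_{H_\infty}$ to bound $\|U_L\|_\infty$ and $\|S_L\|_\infty$) is consistent with the a priori bound assumed on elements of $\Ss$; a short bootstrapping/continuity argument closes this. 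In the scalar case $p=d=n=1$ all the matrix-analytic complications collapse — the square root is scalar and monotone, and scalar spectral factorization has the explicit Poisson-integral formula — so the contraction estimate becomes a clean one-dimensional computation, which is why the theorem is stated (and, per the section preamble, proved) in that regime.
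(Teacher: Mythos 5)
Your route (Banach contraction in the $H_\infty$ metric) is genuinely different from the paper's, and as sketched it has two real gaps. The critical one is step (iv): Lipschitz continuity of the canonical (outer) spectral factorization map with respect to the sup norm, uniformly over densities with $cI \preccurlyeq N \preccurlyeq CI$, is not a standard fact and in general fails. The outer factor is built from $\log N$ and its harmonic conjugate, and harmonic conjugation (the Riesz projection) is unbounded on $L^\infty$, so no uniform sup-norm Lipschitz estimate of the kind your chain requires is available; bounds of this type hold in $L^2$-based or transport-type metrics (this is precisely why the antecedent work on the DR-RO problem measures distances between spectra in the Bures--Wasserstein metric, not $H_\infty$). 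The second gap is quantitative: your composite constant $\kappa(\gamma)=O(\gamma^{-1})$ with unspecified system-dependent factors gives contraction only for large $\gamma$, whereas the theorem is asserted for every $\gamma>\gamma_{H_\infty}$. Your fallback (a) -- invoking the uniqueness of the saddle point from \cref{thm:kkt} -- is not a patch on the contraction argument; it is, in substance, the whole proof, and once you appeal to it the contraction machinery contributes nothing to this particular statement (it would only matter for convergence of the iteration, which the paper treats separately, via a monotonicity argument in the scalar case).

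For comparison, the paper's argument is short and metric-free: the $\gamma$-suboptimal problem \eqref{eq:suboptimal_prob} is concave in $\M$, so the maximizer $\M_\gamma$ (equivalently $\NN_\gamma$) is unique; its causal and causally invertible spectral factor $\L_\gamma$ is unique up to a unitary, which is pinned down by a canonical normalization in $F_3$; and the saddle-point/KKT equations of \cref{thm:kkt}, written in the frequency domain, are exactly the fixed-point relation $L = F_3\circ F_{2,\gamma}\circ(F_1\times\mathrm{id})(L)$ together with \eqref{eq:N}. Existence and uniqueness of the fixed point therefore follow directly from the existence and uniqueness of the saddle point, for every $\gamma>\gamma_{H_\infty}$. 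If you do want a quantitative contraction statement, the workable setting is the Bures--Wasserstein metric on spectra, where a sharp factor of the form $\sqrt{\gamma_{H_\infty}/\gamma}<1$ can be obtained for scalar disturbances, rather than an $H_\infty$-norm Lipschitz chain through the square root and the spectral factorization.
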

\begin{proof}
    The proof is similar to the proof of theorem 13 in \cite{kargin2023wasserstein}. It utilizes the concavity of the problem in $\mathcal{M_\gamma}$ to argue for uniqueness of $\mathcal{M_\gamma}$, and thus of its spectral factor $\mathcal{L_\gamma}$ up to a unitary transformation, which leads to a unique fixed-point.
\end{proof}

Subsequently, we argue that $N_\gamma(z)$ is irrational. Note that $S_{L_\gamma}(z)$ is rational, and assuming $L_\gamma(z)$ is rational implies that $U_{L_\gamma}(z)$ is rational. Hence, $N_\gamma(z)$ involves the square-root of a rational term. As square root does not preserve rationality in general, both $N_\gamma(z)$ and its spectral factor $L_\gamma(z)$ are irrational, leading to a contradiction.  This yields Corollary~\ref{thm:irrational}.
\begin{corollary}\label{thm:irrational}
     $N_\gamma(z)$ and the suboptimal \DRL~controller, $K_\gamma(z)$, are irrational, for any fixed $\gamma\in(\gamma_{H_\infty},\infty)$. Hence, $K_\gamma(z)$ does not have a finite-dimensional state-space representation.
\end{corollary}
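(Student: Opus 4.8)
The plan is to establish irrationality of $N_\gamma(z)$ by contradiction, following the sketch already indicated just before the corollary statement. Suppose, for a fixed $\gamma \in (\gamma_{H_\infty}, \infty)$, that the fixed-point solution $L_\gamma(z)$ — and hence $N_\gamma(z) = L_\gamma(z)^\ast L_\gamma(z)$ — is rational. First I would trace through the rationality status of each object appearing in the KKT equation~\eqref{eq:N}. The transfer function $S_{L_\gamma}(z) = \overline{C}(z^{-1}I - \overline{A})^\inv \overline{B}_{L_\gamma}$ is rational because $(\overline{A}, \overline{C})$ are fixed finite-dimensional system matrices and $\overline{B}_{L_\gamma}$ is a constant matrix (it is a frequency integral of $L_\gamma$, hence just a number, not a function of $z$); so regardless of the rationality of $L_\gamma$, the term $S_{L_\gamma}(z)$ is rational. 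Next, $U_{L_\gamma}(z) = T_{K_\circ}(z) L_\gamma(z)$: since $K_\circ$ is the non-causal LQR-optimal policy and $T_{K_\circ}$ is built from the rational operators $\F, \G$ via~\eqref{eq:T_K}, $T_{K_\circ}(z)$ is rational, so under the rationality assumption on $L_\gamma(z)$ the product $U_{L_\gamma}(z)$ is rational as well.

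The core step is then the observation that the right-hand side of~\eqref{eq:N} contains a matrix square root of $I + 4\gamma^{-1}(S_{L_\gamma}^\ast S_{L_\gamma} + U_{L_\gamma}^\ast U_{L_\gamma})$, a rational (para-Hermitian, positive-definite on $\mathbb{T}$) matrix function of $z$. I would argue that taking this square root does not in general yield a rational function — concretely, for a scalar rational function $g(z)$ that is not a perfect square of a rational function, $\sqrt{g(z)}$ has branch points and is not rational. Therefore $N_\gamma(z)$, being $\tfrac14$ times the square of $I$ plus this square root, is itself irrational unless the radicand happens to be a perfect square; and I would note that genericity (or a direct inspection of the degree/pole-zero structure of $I + 4\gamma^{-1}(S_{L_\gamma}^\ast S_{L_\gamma} + U_{L_\gamma}^\ast U_{L_\gamma})$) rules that out. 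This contradicts the assumption that $N_\gamma(z)$ was rational. Hence $N_\gamma(z)$ is irrational, and so is any spectral factor $L_\gamma(z)$.

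Finally, for the controller $K_\gamma(z)$, I would invoke~\eqref{eq:K_freq}: $K_\gamma(z) = K_\circ(z) - \Delta^\inv(z) S_{L_\gamma}(z) L_\gamma^\inv(z)$. Here $K_\circ(z)$, $\Delta^\inv(z)$ and $S_{L_\gamma}(z)$ are rational, but $L_\gamma^\inv(z)$ is irrational (inverse of an irrational function, and it cannot accidentally cancel the irrationality since $S_{L_\gamma}(z)\Delta^\inv(z)$ is rational and nonzero); thus $K_\gamma(z)$ is irrational. Since a transfer function admits a finite-dimensional state-space realization if and only if it is rational, $K_\gamma(z)$ has no finite-order state-space representation, completing the proof.

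The main obstacle is making the square-root-does-not-preserve-rationality argument airtight: one must ensure the radicand $I + 4\gamma^{-1}(S_{L_\gamma}^\ast S_{L_\gamma} + U_{L_\gamma}^\ast U_{L_\gamma})$ is genuinely not a perfect square of a rational matrix function at the fixed-point $L_\gamma$, rather than merely "generically" so. For the scalar case this reduces to a clean statement about branch points of $\sqrt{g(z)}$ for a rational $g$; for the matrix case one would want to reduce to the scalar situation via eigenvalue/determinant considerations or simply restrict the irrationality claim to the scalar setting (as the paper does for the convergence result), which is the cleanest route.
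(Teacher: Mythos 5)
Your proposal is correct and follows essentially the same route as the paper: assume $L_\gamma(z)$ rational, note that $S_{L_\gamma}(z)$ is rational because $\overline{B}_{L_\gamma}$ is a constant matrix, that $U_{L_\gamma}(z) = T_{K_\circ}(z)L_\gamma(z)$ is then rational, and conclude from~\eqref{eq:N} that $N_\gamma(z)$ is the square root of a rational term and hence irrational, a contradiction, with irrationality of $K_\gamma(z)$ and the absence of a finite-order state-space realization following from~\eqref{eq:K_freq}. Your closing remark about needing to rule out the radicand being a perfect square is in fact more careful than the paper's own brief ``square root does not preserve rationality in general,'' but it does not change the approach.
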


Despite the fact that $K_\gamma(z)$ does not lend itself to a finite-dimensional state-space form, Theorem~\ref{thm:fixed-point} affirms that the suboptimal controller $K_\gamma(z)$ \eqref{eq:K_freq} can be derived from $L_\gamma(z)$, by executing a fixed-point iteration on $L_\gamma(z)$ as elucidated in Section \ref{subsec:alg}.

\subsection{Algorithm Description}\label{subsec:alg}

In light of Theorem~\ref{thm:fixed-point}, we propose Algorithm \FixAlg to compute the suboptimal controller $K_\gamma(z)$ at uniformly sampled points on the unit circle, $\mathbb{T}_N \!\defeq\! \{\e^{j 2\pi n /N} \mid n\=0,\!\dots\!,N\-1\}$. With an initial estimate $L_{\gamma}^{(0)}(z)$, \FixAlg iteratively computes the $n$-th step as $N^{(n)}_\gamma(z)=F_{2,\gamma} \!\circ\! (F_1 \times\text{id} )(L_\gamma^{(n)}(z))$.%$\Omega_N \!\defeq\! \{2\pi k / N \mid k\=0,\dots,N\-1\}$. 

Following this, we compute the spectral factor $L_\gamma^{(n+1)}(z)$  at regularly spaced points along the unit circle using the \texttt{SpectralFactor} algorithm. Upon reaching convergence within a predetermined tolerance at the $N$-th iteration, we determine the suboptimal $N_\gamma^{(N)}(z)$ from which we derive the \emph{suboptimal} controller $K_\gamma^{(N)}(z)$ at each sampled frequency point using~\eqref{eq:K_freq}. 

Note that this approach can assess $N_\gamma^{(n)}(z)$ for any arbitrary point on the unit circle via \cref{eq:N}, however, the \texttt{SpectralFactor} algorithm computes $L_\gamma^{(n)}(z)$ and $K_\gamma^{(n)}(z)$ only for a discrete number of samples on the unit circle. The details of the \texttt{SpectralFactor} algorithm, which is based on discrete Fourier transform (DFT) \cite{rino_factorization_1970} and tailored for \emph{irrational} spectra, are in Appendix \ref{ap:spectral factorization}.

\begin{algorithm}[tb]
   \caption{\FixAlg  }
   \label{alg:fixed_point}
\begin{algorithmic}
   \STATE {\bfseries Input:} $\gamma\!\>\!{\gamma}_{\,H_\infty}$, system $(\overline{A}, \overline{C}, \overline{D})$, discretization $N$
   
   Initialize $L_{\gamma}^{(0)}(z), \forall z \!\in\! \mathbb{T}_N = \{\e^{j 2\pi n /N} \mid n=0,\!...\!,N-1\}$
   \REPEAT 
   \STATE Compute $\overline{B}_{L_\gamma}^{(n)}=F_1(L_\gamma^{(n)}(z))$ numerically $$\overline{B}_{L_\gamma}^{(n)} \gets \frac{1}{N} \sum_{z\in \mathbb{T}_N}  (I-z \overline{A})^{-1}\overline{D} L_\gamma^{(n)}(z) $$
   \vspace{-3mm}
   \STATE Compute $N_\gamma^{(n)}(z) \leftarrow  F_{2,\gamma}(\overline{B}_{L_\gamma}^{(n)},L_\gamma^{(n)}(z))$
   \STATE Get $L_\gamma^{(n+1)}(z) \leftarrow \texttt{SpectralFactor}( N_\gamma^{(n)}(z))$
   \STATE Update $n\leftarrow n+1$
   \UNTIL{convergence of $N_\gamma^{(n)}(z)$}
\end{algorithmic}
\end{algorithm}

\subsection{Convergence of \FixAlg }\label{sec:cvg}

Though extensive simulations show exponential convergence of \FixAlg, we provide a convergence proof in the case of a scalar system. As we showed in theorem \ref{thm:fixed-point} that there exists a unique fixed point to our algorithm, the proof sketch for convergence follows the following arguments. Note that this proof is for the special case when $p = d=n = 1$.

\begin{itemize}
    \item We consider two spectrums $\NN_1$ and $\NN_2$.
    \item Then, we consider the change that one iteration does to these spectrums. That is, after one iterations, the spectrums are $\bar{\NN_1}$ and $\bar{\NN_2}$.
    \item We show that if $\NN_1 \preccurlyeq \NN_2$, then $\bar{\NN_1} \preccurlyeq \bar{\NN_2}$. That is, the iterative algorithm preserves the order of the spectrums.
    \item We can now initialize the algorithm with $\NN = 0$. Since, our iterates satisfy: $\NN \succcurlyeq 0$, the spectrums are monotonically increasing at each iteration.
    \item Since there is a unique fixed point, the algorithm must converge to the optimal $\NN$.
\end{itemize}
The complete proof details are shown in Appendix \ref{ap:pfcvg}.

\section{Approximating the non-rational Controller by a Rational Controller with a State-Space Structure}\label{sec:rational_approx}
\subsection{Rational Approximation}

Our objective is to find the optimal $m^{th}$ order rational approximation, denoted as $P(z)/Q(z)$, for the positive non-rational function $N(z)$. This rational approximation serves as the basis for obtaining the spectral factor $L(z)$ as noted in lemma \ref{lemma:RA} and thus deriving the controller $K(z)$ \eqref{eq:K_freq}. We provide results for scalar disturbances, \ie~$p=1$, while the states and control inputs can be arbitrarily dimensional vectors. We leave the generalization to vector disturbances (\ie, $p>1$) for future work.

\begin{problem}
[Rational approximation using $H_\infty$ norm minimization] 
\label{pb:RA}Given a positive non-rational function $N(z)$ , find the best rational approximation of order at most  $m \in \mathbb{N}$ with respect to $H_\infty$-norm:
\begin{align}\label{eq:obj_RA}
    &\inf_{\substack{p_0,\dots,p_m, q_0,\dots,q_m \in \mathbb{R}}} \Norm[\infty]{\frac{P(z)}{Q(z)} - N(z) }
\end{align}
with $P(z)=\sum_{k=-m}^{m} p_k z^{-k}$, $p_k = p_{-k} \in \R$, and $P(z)> 0$, (and similarly for $Q(z)$).
\end{problem}

To solve problem \ref{pb:RA} using standard convex optimization tools, we follow the approach in \cite{kargin2024infinitehorizondistributionallyrobustregretoptimal} and  consider instead the sublevel sets of the objective function \eqref{eq:obj_RA} and reduce the problem to a convex feasibility problem. 

\begin{lemma}[Rational approximation using a convex feasibility problem]\label{lem:hinf_affine}Fixing a minimum level $\epsilon>0$, problem \ref{pb:RA} can be relaxed to a convex problem:
\begin{align*}
    &\begin{aligned}
        &\text{Find } \mathbf{p} = (p_0,p_1,\dots,p_m),\, \mathbf{q} = (q_0,q_1,\dots,q_m) \\
        &\text{s.t } P(z),Q(z)\geq 0, \, \max_{z\in \mathbb{T}} \left| \frac{P(z)}{Q(z)} - N(z) \right| \leq \epsilon, \\
        &\textrm{or equivalently}\\
        &\text{s.t } \begin{cases}
            P(z) - (N(z) + \epsilon) Q(z) \leq 0, \quad \forall z\in\mathbb{T} \\
            P(z) - (N(z) - \epsilon) Q(z) \geq 0, \quad \forall z\in\mathbb{T}\\
            P(z),Q(z)\geq 0, \quad \forall z\in\mathbb{T}
        \end{cases} 
    \end{aligned} \label{eq:hinf_affine}
    \end{align*}
\end{lemma}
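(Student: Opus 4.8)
The plan is to show that the two formulations in the statement are equivalent and that each is convex in the decision variables $\mathbf{p},\mathbf{q}$. The starting point is the elementary observation that, since $Q(z)$ is constrained to be nonnegative on the unit circle, dividing or multiplying an inequality by $Q(z)$ preserves its direction. Concretely, for each fixed $z\in\mathbb{T}$ with $Q(z)>0$, the bound $\left|\frac{P(z)}{Q(z)} - N(z)\right| \le \epsilon$ is equivalent to the pair of scalar inequalities $-\epsilon \le \frac{P(z)}{Q(z)} - N(z) \le \epsilon$, and multiplying through by $Q(z)>0$ yields exactly
\[
P(z) - (N(z)+\epsilon)Q(z) \le 0, \qquad P(z) - (N(z)-\epsilon)Q(z) \ge 0.
\]
I would note that $P(z)$ and $Q(z)$ are real on $\mathbb{T}$ because of the symmetry $p_k = p_{-k}$ (and likewise $q_k = q_{-k}$), so $P(z) = p_0 + 2\sum_{k=1}^m p_k \cos(k\omega)$ with $z = e^{j\omega}$, and similarly for $Q$; hence the absolute value above is a genuine real absolute value and the manipulation is legitimate. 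The case $Q(z)=0$ is handled by the separate constraint $Q(z)\ge 0$ together with continuity (or is simply excluded, since we also require $P(z)>0$, $Q(z)>0$ in Problem \ref{pb:RA}).

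Next I would verify convexity. The key point is that $P(z)$ and $Q(z)$ are \emph{affine} (linear) functions of the decision vector $(\mathbf p,\mathbf q)$ for every fixed $z$. Therefore $P(z) - (N(z)+\epsilon)Q(z)$ is an affine function of $(\mathbf p,\mathbf q)$ for each $z$, and the constraint that it be $\le 0$ for all $z\in\mathbb{T}$ is an (infinite) intersection of halfspaces, hence a convex constraint. The same argument applies to $P(z) - (N(z)-\epsilon)Q(z)\ge 0$ and to $P(z)\ge 0$, $Q(z)\ge 0$. An intersection of convex sets being convex, the feasible set is convex, so this is a convex feasibility problem. I would also remark that each semi-infinite constraint ``$\forall z\in\mathbb{T}$'' can be discretized on the grid $\mathbb{T}_N$ for numerical implementation, or enforced exactly via a positivity/sum-of-squares (trigonometric Kalman–Yakubovich–Popov) representation, matching the treatment in \cite{kargin2024infinitehorizondistributionallyrobustregretoptimal}.

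Finally, I would connect this back to Problem \ref{pb:RA}: the original objective $\inf \Norm[\infty]{P/Q - N}$ equals the infimum over $\epsilon>0$ such that the feasibility problem above admits a solution, so solving the relaxation for a decreasing sequence of $\epsilon$ (e.g. by bisection) recovers the optimal $H_\infty$ approximation error to any desired accuracy. The main obstacle, and the only real subtlety, is the handling of the constraint $Q(z)>0$ versus $Q(z)\ge 0$: one must argue that the sign-preserving multiplication by $Q(z)$ is valid on all of $\mathbb{T}$ and that the relaxed problem does not introduce spurious solutions with $Q(z)$ vanishing somewhere (which would make $P/Q$ ill-defined); this is why the nonnegativity of $P$ and $Q$ is carried along explicitly as a separate constraint. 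Everything else is a direct, routine manipulation.
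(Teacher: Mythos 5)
Your proposal is correct and is essentially the argument the paper intends: the paper states Lemma~\ref{lem:hinf_affine} without an explicit proof (deferring to the approach of the cited reference), and your verification --- multiplying the two-sided bound by $Q(z)\ge 0$ to obtain constraints that are affine in $(\mathbf{p},\mathbf{q})$ for each fixed $z\in\mathbb{T}$, hence a convex (semi-infinite) feasibility set, with bisection over $\epsilon$ recovering Problem~\ref{pb:RA} --- is exactly the routine reasoning being relied upon. Your remark on the $Q(z)>0$ versus $Q(z)\ge 0$ relaxation is the only genuine subtlety and you handle it appropriately.
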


Although the inequalities in Lemma \ref{lem:hinf_affine} are infinitely many, we can check these inequalities solely for a finite set of frequencies, such as $\mathbb{T}_N = \{\e^{j 2\pi n /N} \mid n\=0,\!\dots\!,N\-1\}$ for $N\gg m$.  In fact,  the finite polynomials $P(z)$ and $Q(z)$ can be fully characterized with $N\geq 2m$ number of uniformly sampled frequencies on the unit circle by Nyquist sampling theorem. By increasing the number of samples, the accuracy of this method can be improved to any desired fidelity. 

Once obtained a rational approximation $P(z) / Q(z)$ for $N(z)$, we can find the rational canonical factor $L(z)$ of $P(z) / Q(z)$ from the following lemma.

\begin{lemma}[Canonical Factorization \cite{sayed_survey_2001}]\label{lemma:RA}
Given a polynomial of order $m$, $R(z)=\sum_{k=-m}^{m} r_k z^{-k}$, where $r_k = r_{-k} \in \R$, and $R(z)> 0$, a causal canonical factor, $L(z)= \sum_{k=0}^{m} l_k z^{-k}$, exists and satisfies $R(z) = |L(z)|^2$.
\end{lemma}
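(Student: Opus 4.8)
The claim is a classical spectral factorization fact for scalar trigonometric polynomials, often attributed to Fejér–Riesz. The plan is to regard $R(z)$ as a Laurent polynomial, pass to the variable on the unit circle, and factor it in terms of its zeros. First I would observe that since $r_k = r_{-k}\in\R$, the function $R(z)$ is real and symmetric under $z\mapsto z^{-1}$, so multiplying through by $z^{m}$ gives $z^{m}R(z) = \sum_{k=-m}^{m} r_k z^{m-k}$, a genuine polynomial in $z$ of degree $2m$ with real coefficients, call it $\tilde R(z)$. The symmetry $R(z)=R(z^{-1})$ implies that if $\zeta$ is a root of $\tilde R$ then so is $1/\zeta$, and since the coefficients are real, roots also come in conjugate pairs $\zeta,\bar\zeta$. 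Moreover, because $R(z)>0$ for all $z\in\mathbb{T}$, no root lies on the unit circle.

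Next I would use this root-pairing structure to split the $2m$ roots into two groups of $m$: the roots $\zeta_1,\dots,\zeta_m$ strictly inside the open unit disk, and their reciprocals $1/\zeta_1,\dots,1/\zeta_m$ strictly outside (conjugate roots are kept together, so the inside group is closed under conjugation). Writing $\tilde R(z) = r_m \prod_{i=1}^m (z-\zeta_i)(z-1/\zeta_i)$ — with leading coefficient $r_m$, which is real and, after checking the sign via positivity of $R$ on $\mathbb{T}$, can be taken so that things work out — I would then define
\[
L(z) \defeq c \prod_{i=1}^{m} (1 - \zeta_i z^{-1}),
\]
with $c>0$ chosen appropriately. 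This $L(z)$ is a polynomial of degree $m$ in $z^{-1}$, i.e. $L(z)=\sum_{k=0}^m l_k z^{-k}$, it is causal, and because its zeros $\zeta_i$ lie strictly inside the unit disk it is a minimum-phase (canonical) factor with a stable causal inverse. The coefficients $l_k$ are real because the $\zeta_i$ were chosen closed under conjugation.

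Finally I would verify $R(z) = |L(z)|^2$ on $\mathbb{T}$. On the unit circle $|L(z)|^2 = L(z)\overline{L(z)} = L(z)L(z^{-1})$ (using $\bar z = z^{-1}$ and real coefficients), and expanding $L(z)L(z^{-1}) = c^2 \prod_i (1-\zeta_i z^{-1})(1-\zeta_i z)$; comparing with $z^{-m}\tilde R(z) = R(z)$ written in factored form, one matches the product of $(z-\zeta_i)(z-1/\zeta_i)$ terms against $(1-\zeta_i z^{-1})(1-\zeta_i z)$ up to constants, and fixes $c$ by evaluating both sides at a single convenient point (or by matching the $r_m$ coefficient). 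The main obstacle — really the only subtle point — is the bookkeeping around the leading/constant coefficients and signs: ensuring $r_m$ and $c$ can be chosen consistently so that $L(z)$ has real coefficients, is genuinely of degree $m$ (no root at the origin or at infinity spoils the count), and reproduces $R$ exactly rather than a positive multiple. Positivity of $R$ on $\mathbb{T}$ is exactly what rules out unit-circle zeros and guarantees the sign works; since this lemma is quoted from \cite{sayed_survey_2001}, I would cite it for the detailed coefficient bookkeeping rather than reprove it in full.
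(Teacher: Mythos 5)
Your proposal is correct: it is the classical Fej\'er--Riesz root-pairing argument, and the paper itself offers no proof of this lemma --- it is quoted directly from the cited survey \cite{sayed_survey_2001}, where essentially this same argument (pass to $z^{m}R(z)$, pair roots $\zeta\leftrightarrow 1/\zeta$ and $\zeta\leftrightarrow\bar\zeta$, use $R>0$ on $\mathbb{T}$ to exclude unit-circle zeros, collect the inside-the-disk roots into a causal minimum-phase factor, and fix the positive constant) is the standard derivation. The only bookkeeping points worth making explicit are the ones you already flag: keeping the inside-disk root set closed under conjugation so that the $l_k$ are real, and the degenerate case $r_m=0$, where $z^{m}R(z)$ has degree below $2m$ (equivalently a root at the origin pairs with one at infinity); this only means some leading $l_k$ vanish, which is still consistent with the stated form $L(z)=\sum_{k=0}^{m} l_k z^{-k}$. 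So your argument fills in a proof the paper delegates to its reference, and does so correctly.
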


\subsection{Controller in Time-Domain}

Once we have the rational approximation of $L(z)$ as $L(z)=(I+\Tilde{C} (z I -\Tilde{A})^{-1}\Tilde{B})\Tilde{D}^{1/2}$, we can compute the \DRL~controller in state-space form.
\begin{lemma}[\DRL~control in state-space form]\label{lemma:rationalK}
    Given the rational approximation of $L(z)$ as $L(z)=(I+\Tilde{C} (z I -\Tilde{A})^{-1}\Tilde{B})\Tilde{D}^{1/2}$, the \DRL~controller is given by
    \begin{align}%\label{eq:RationalK}
        e(t+1)&=\Tilde{F} e(t)+\Tilde{G} w(t)\\ 
        u(t)&=\Tilde{H} e(t)+ \Tilde{J} w(t) \label{eq:RationalK2}
    \end{align}
where $(\Tilde{F},\Tilde{G},\Tilde{H},\Tilde{J})$ are functions of the matrices $(A,B,C,\Tilde A,\Tilde B, \Tilde C )$ (see Appendix \ref{app:controller} for the details of the appropriate definitions, and Appendix \ref{ap:pfstate} for the complete proof).
\end{lemma}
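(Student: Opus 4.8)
The plan is constructive. Every factor in the frequency-domain controller formula \eqref{eq:K_freq}, $K_\gamma(z) = K_\circ(z) - \Delta^{-1}(z)\,S_{L_\gamma}(z)\,L_\gamma^{-1}(z)$, has a finite-dimensional state-space realization once $L_\gamma$ is replaced by the rational model $L(z) = (I + \tilde C(zI-\tilde A)^{-1}\tilde B)\tilde D^{1/2}$, so it suffices to compose these realizations with the standard algebra for series, parallel and inverse interconnections and to collect the resulting block matrices. First I would assemble the ingredients: $K_\circ(z) = -(\I+\F^\ast\F)^{-1}\F^\ast\G$ comes from the LQR control Riccati equation and splits into a strictly causal plus an anti-causal rational transfer matrix with coefficients built from $(A,B_u,B_w)$; $\Delta(z)$, the canonical spectral factor of $\I+\F^\ast\F$, equals $I$ plus a stable strictly causal rational term from the same Riccati data, so $\Delta^{-1}(z)$ is causal and stable; $S_{L_\gamma}(z) = \overline{C}(z^{-1}I - \overline{A})^{-1}\overline{B}_{L_\gamma}$ is given, with $(\overline{A},\overline{C},\overline{D})$ functions of $(A,B,C)$, and --- this is the single place where rationality of $L$ is used --- the integral $F_1$ of \eqref{eq:fixedpointmaps} can now be evaluated in closed form by residues, making $\overline{B}_{L_\gamma}$ an explicit function of $(\overline{A},\overline{D},\tilde A,\tilde B,\tilde C,\tilde D)$; finally $L_\gamma^{-1}(z) = \tilde D^{-1/2}(I - \tilde C(zI - \tilde A + \tilde B\tilde C)^{-1}\tilde B)$ by the matrix inversion lemma, which is causal and stable because the canonical factor of Lemma \ref{lemma:RA} is minimum phase.

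Given these pieces I would form the cascade $\Delta^{-1}(z)S_{L_\gamma}(z)L_\gamma^{-1}(z)$ by stacking the three internal state vectors, then the parallel sum with $K_\circ(z)$ by further stacking. The matrices $(\tilde F,\tilde G,\tilde H,\tilde J)$ of the resulting realization are exactly the block matrices produced by these two operations and, by inspection of the series/parallel/inverse formulas, depend only on $(A,B,C,\tilde A,\tilde B,\tilde C)$ (the feedthrough $\tilde J$ also on $\tilde D$, which is fixed by the $\tilde A,\tilde B,\tilde C$ data through the canonical factorization). Because the controller is a disturbance-feedback law $\u = \K_\gamma\w$, having written $K_\gamma(z)$ as a proper rational transfer matrix $\tilde J + \tilde H(zI-\tilde F)^{-1}\tilde G$ we simply read off its realization to obtain the claimed time-domain model $e(t{+}1) = \tilde F e(t) + \tilde G w(t)$, $u(t) = \tilde H e(t) + \tilde J w(t)$.

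The step that needs care --- and the main obstacle --- is establishing that the assembled realization is genuinely causal and stable, since $S_{L_\gamma}(z)$ is anti-causal in isolation and the raw cascade with the causal factors $\Delta^{-1}$ and $L_\gamma^{-1}$ is a priori two-sided. I would resolve this by appealing to the equivalent expression \eqref{eq:K from L in KKT}, $\K_\gamma = \Delta^{-1}\{\Delta\K_\circ\L_\gamma\}_{+}\L_\gamma^{-1}$, which is manifestly causal: the anti-causal modes carried by $S_{L_\gamma}$ (driven through $z^{-1}(\cdot)$ by $\overline{A}$, with $\rho(\overline{A})<1$) must cancel against the anti-causal part of $K_\circ$, and checking this at the level of residues leaves a state matrix $\tilde F$ built only from the causal, stable blocks ($\Delta^{-1}$, $L_\gamma^{-1}$, and the Riccati-stabilized closed-loop matrix); stability of $\tilde F$ then follows together with Corollary \ref{thm:stabilizable}. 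Finally, I would observe that this derivation is algebraically identical to the DR-RO computation in \cite{kargin2023wasserstein} except that the operator entering the KKT system is $\T_{\K}^\ast\T_{\K}$ in place of the regret operator; this changes the concrete matrices $K_\circ$, $\Delta$, $\overline{A}$, $\overline{C}$, $\overline{D}$ but not the composition recipe, so the bookkeeping can be inherited from there and only the modified coefficient blocks re-derived.
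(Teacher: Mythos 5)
Your proposal is correct and takes essentially the same route as the paper's Appendix~\ref{ap:pfstate} proof: both start from the frequency-domain KKT controller formula with the rational factor $L(z)$ and its explicit inverse $L^{-1}(z)=\tilde{D}^{-1/2}\bigl(I-\tilde{C}(zI-(\tilde{A}-\tilde{B}\tilde{C}))^{-1}\tilde{B}\bigr)$, plug in the state-space forms of $\Delta^{-1}$ and $\{\Delta \K_\circ\}_{\pm}$ coming from the LQR Riccati data, extract the causal part of the anticausal-times-causal product in closed form, and then collect the blocks into a single realization $(\tilde{F},\tilde{G},\tilde{H},\tilde{J})$. The anticausal-mode cancellation you propose to verify ``at the level of residues'' is precisely what the paper carries out explicitly through the one-line identity $\{\Delta\K_\circ\tilde{\L}\}_{+}=\{\Delta\K_\circ\}_{+}\tilde{L}+\{\{\Delta\K_\circ\}_{-}\tilde{\L}\}_{+}$ and the Sylvester/Lyapunov equation for the auxiliary matrix $\tilde{U}$, whose solution encodes exactly the residue sums (equivalently, your closed-form evaluation of $\overline{B}_{L}$), so your plan is a correct but less explicit rendering of the same argument.
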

\section{Numerical Simulations}\label{sec::numerical}
This section presents a comparative evaluation of the DR-LQR controller vis-à-vis $H_2$ and $H_\infty$ controllers, alongside the finite-horizon DR-LQR counterpart. Our evaluation encompasses both frequency domain and time-domain assessments which showcase the efficacy of the rational approximation method. Our analysis focuses on benchmark models from \cite{aircraft} such as [AC15], [REA4] and [HE3]. Given the similarity in controller performance across all systems, we opt to present results solely for [AC15], a four-state aircraft model, due to space limitations. We choose our nominal distribution to be Gaussian, with zero mean and identity covariance.

\subsection{Frequency Domain Evaluations}
We examine the dynamics of the DR-LQR controller and its rational approximation across varying radii $r$.

The power spectrum $N(e^{j\omega})$ of the worst-case disturbance is illustrated for three distinct $r$ values for the [AC15] system in Figure \ref{fig:fig_m_freq_1}. Notably, for $r=0.01$, the worst-case disturbance exhibits near-white behavior, consistent with the nominal disturbance. However, as $r$ increases, the temporal correlation of the worst-case disturbance intensifies, leading to a more pronounced peak in the power spectrum.

\begin{figure}[htbp]
	\centering		 \includegraphics[width=0.42\textwidth]{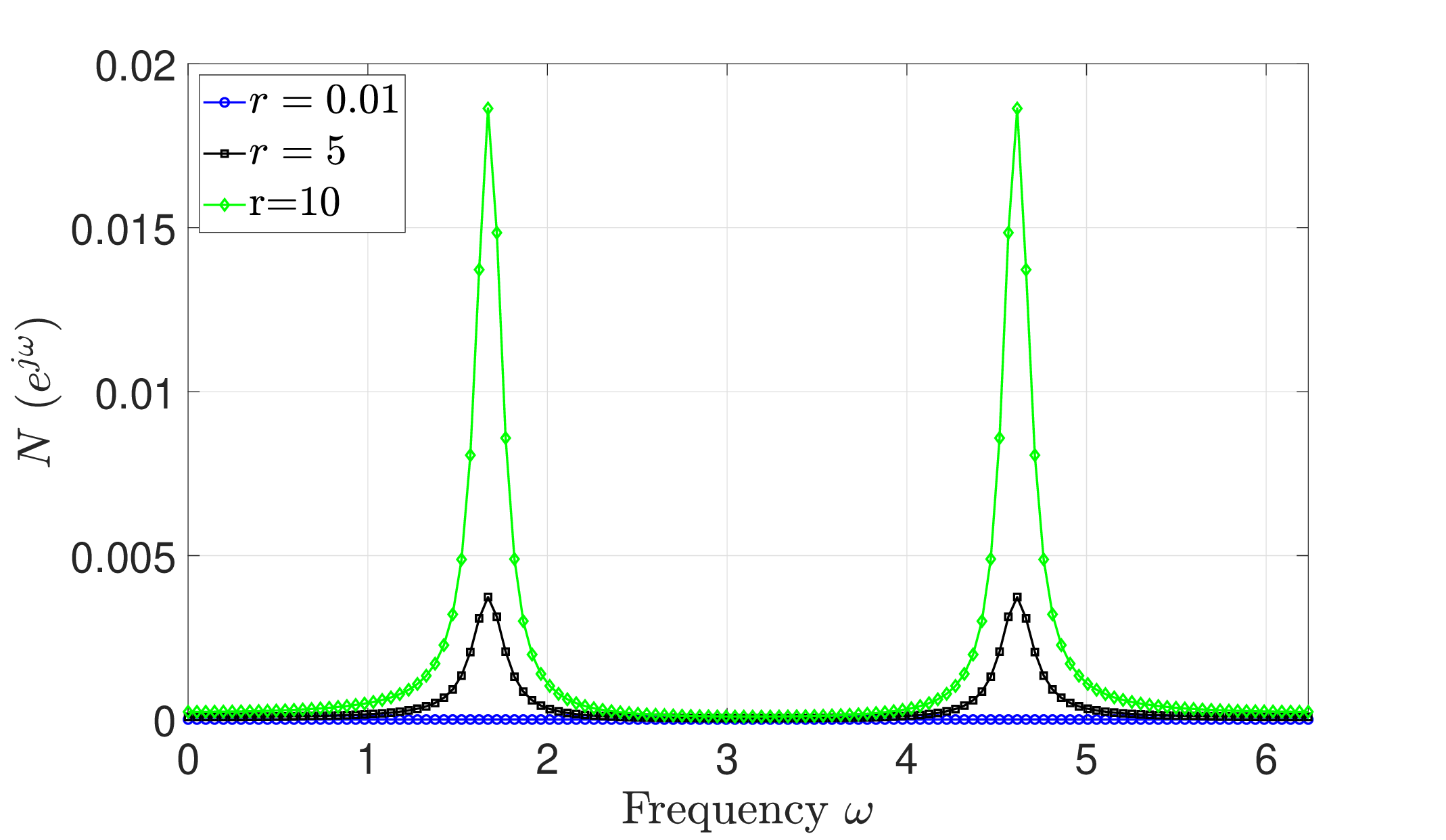}
       \caption{The power spectrum $N(\ejw)$ of the worst-case disturbance, when $r \in (0.01, 5, 10)$ for system [AC15]. 
        }
		\label{fig:fig_m_freq_1}
\end{figure}

Figures \ref{fig:ER} and \ref{fig:PercDiff} illustrate the worst-case expected LQR cost for the DR-LQR, $H_2$, and $H_\infty$ controllers applied to the [AC15] system. As $r$ varies, the performance of the DR-LQR closely mirrors that of the ${H}_2$ for smaller values of $r$. However, with increasing $r$, the worst-case LQR cost tends to align more closely with that of the $H_\infty$ controller. Across all ranges of $r$, the DR-LQR consistently outperforms the other controllers and achieves the lowest worst-case expected cost.
\begin{figure}[htbp]
		\centering		\includegraphics[width=0.42\textwidth]{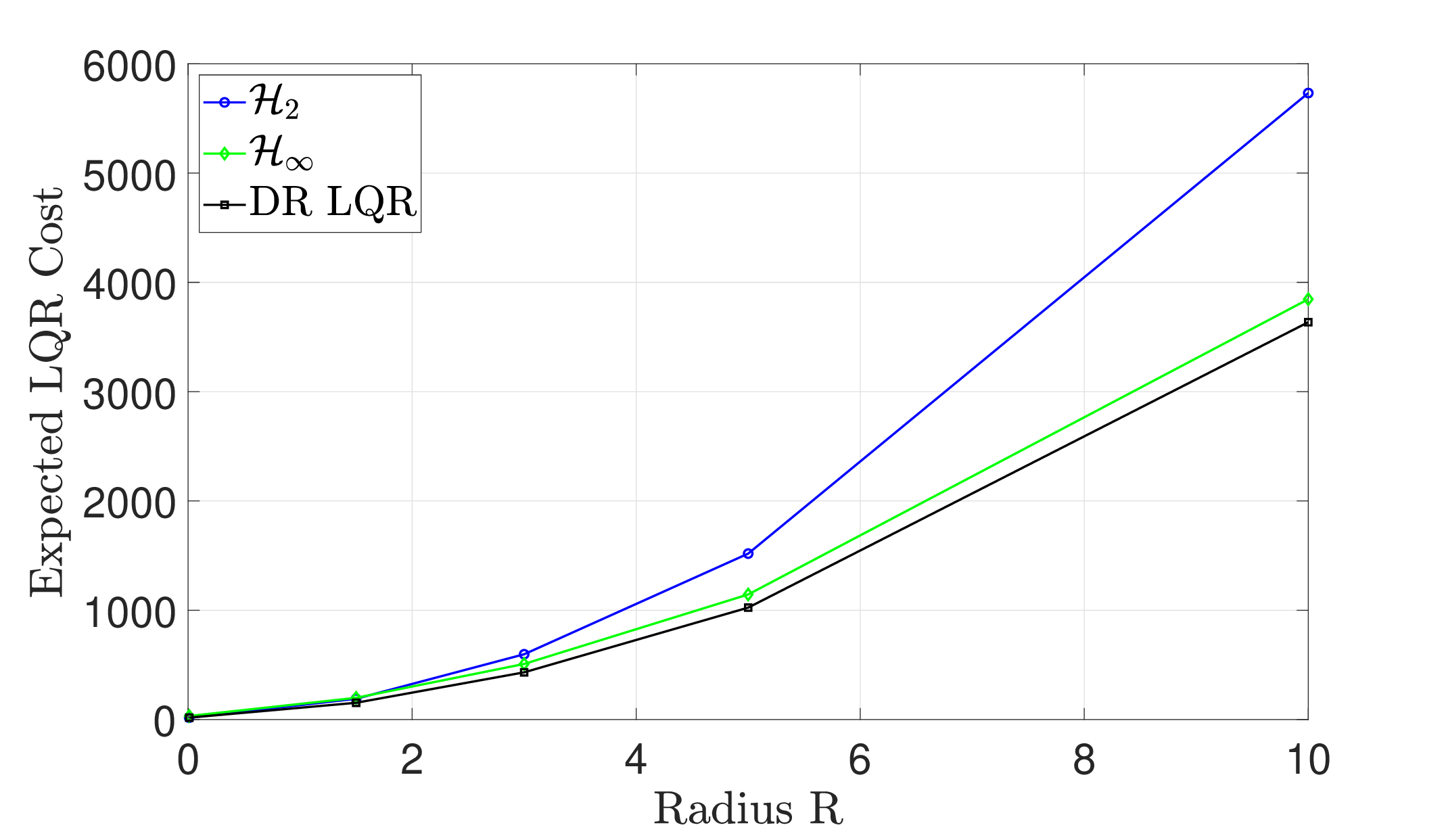} \caption{The worst-case expected LQR cost of the classical controllers $H_2$, $H_\infty$ compared to the DR-LQR, for the system [AC15], for different $r$ values. The DR-LQR minimizes the cost at all $r's$. 
      }
    \label{fig:ER}
\end{figure}

\begin{figure}[htbp]
        \vspace{-5mm}
		\centering		\includegraphics[width=0.42\textwidth]{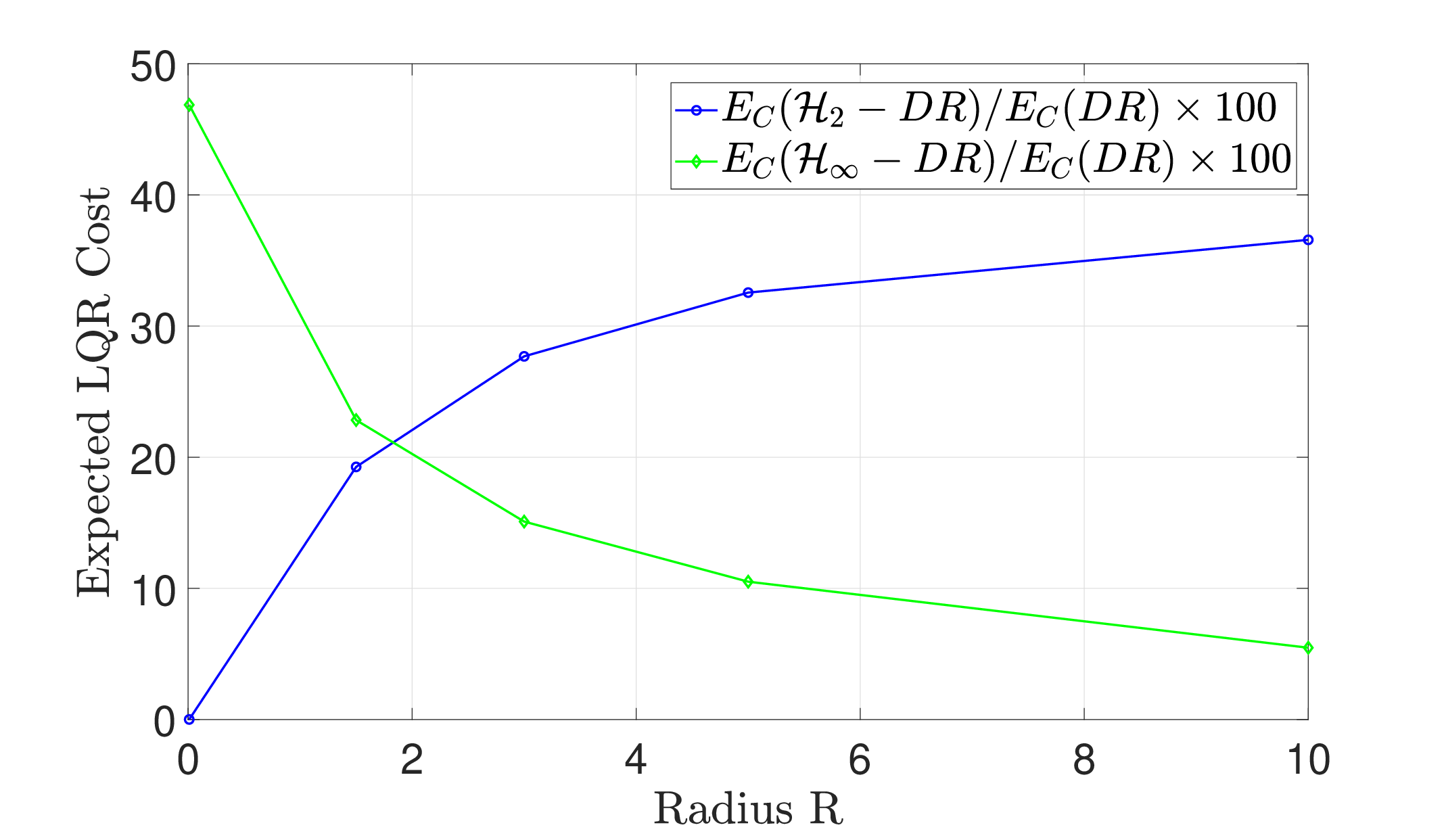} \caption{The percentage difference in the worst case LQR cost relative to the DR-LQR (see the legend) for the system [AC15], for different values of $r$. When $r$ is small (large) $r$, the cost of DR-LQR controller closely aligns with that of ${H}_2$ (${ H}_\infty$). For $r=1.5$, the cost of the DR-LQR is less than that of $H_2$ by $22.8\%$, and that of $H_\infty$ by $19.2\%$ .	
      }
    \label{fig:PercDiff}
\end{figure}
Additionally, we consider another performance metric—the operator norm of $\T_\K$ minimized by the $H_\infty$ controller. This metric, expressed in the frequency domain as $\|\T_\K \|_{op}^2 = \max_{0 \leq \omega \leq 2\pi} \sigma_{\max}( T_K^\ast(\ejw)T_K(\ejw))$, is depicted across all frequencies in Figure \ref{fig:R}. The results show that the DR controller demonstrates a consistent interpolation between the $H_2$ and $H_\infty$ controllers across all frequencies.

\begin{figure}[htbp]
    \centering
    \includegraphics[width=0.42\textwidth]{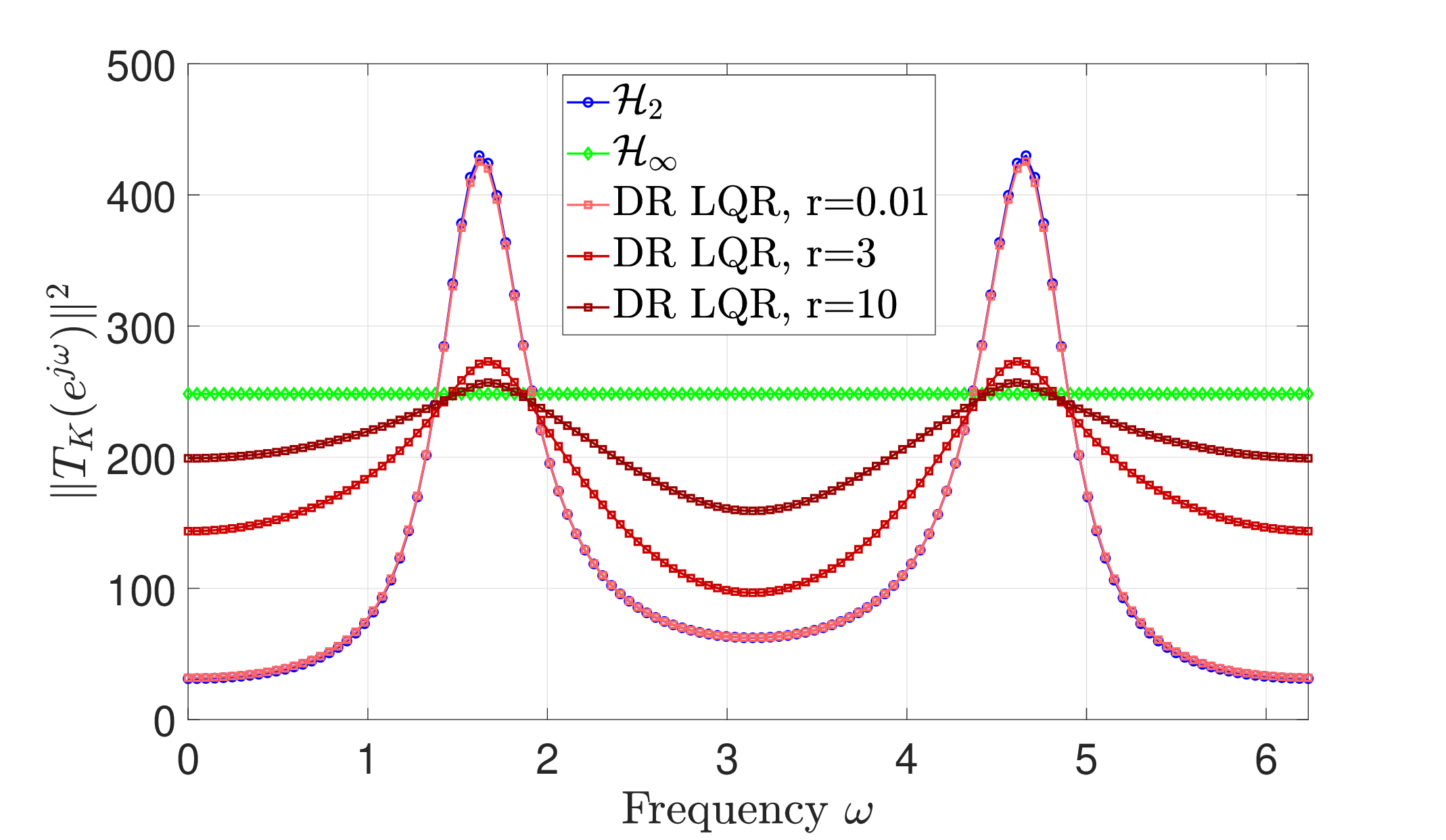}
    \caption{The operator norm, $\|T_K^\ast(\ejw)T_K(\ejw)\|$, of different controllers at all frequencies $\omega \in [0,2\pi]$, for system [AC15]. The DR-LQR cost interpolates between $H_2$ and $H_\infty$ based on the value of $r$. When $r$ is small (large), DR closely aligns with $H_2$ ($H_\infty$) \emph{across all frequencies}. 
    }
    \label{fig:R}
    \vspace{-5mm}
\end{figure}
\begin{comment}
For the [AC15] system, the worst-case expected LQR cost, for DR-LQR, the $H_2$, and $H_\infty$ controllers, are depicted in Figure \ref{fig:ER}. 
We observe that for smaller $r$, the DR-LQR performs close to the $\mathcal{H}_2$ controller. However, as $r$ increases, the worst case LQR cost is close to the LQR cost achieved by the $H_\infty$ controller.  Throughout the variation in $r$, the DR-LQR achieves the lowest worst case expected LQR cost among all the other mentioned controllers. Another performance metric considered is the operator norm of $T_K$ minimized by the $H_\infty$ controller, which is expressed, in the frequency domain as: $\|\T_\K \|_{op}^2 = \max_{0 \leq \omega \leq 2\pi} \sigma_{\max}( T_K^\ast(\ejw)T_K(\ejw))$. This metric is visualized across all frequencies in Figure \ref{fig:R}, where the DR controller exhibits an interpolation between the $H_2$ and $H_\infty$ controllers \emph{across all frequencies}.

To implement the DR-LQR controller in practice, we need a rational controller. We find the rational approximation of $N(\ejw)$ as $\frac{P(\ejw)}{Q(\ejw)}$ using the method of Section \ref{sec:rational_approx} for [AC15] and degrees $m=1,2$. The performance of the resulting rational controllers is compared to the nonrational DR-LQR Table \ref{table:worstregret}. As can be seen, the rational approximation with an order 2 achieves an expected LQR cost that well matches that of the non-rational for all values of $r$, with less than 1\% difference in costs for all $r$'s. 

\end{comment}
To practically implement the DR-LQR controller, we find the rational controller by employing the method outlined in Section \ref{sec:rational_approx}, from which we obtain the rational approximation of $N(\ejw)$ as $\frac{P(\ejw)}{Q(\ejw)}$ with degrees $m=1,2$, for the [AC15] system. Table \ref{table:worstregret} compares the performance of these resulting rational controllers to the non-rational DR-LQR. Notably, the rational approximation with an order of 2 achieves an expected LQR cost that closely matches that of the non-rational controller with a difference of less than 1\% in costs for all $r$ values.

\begin{table}[htbp]
    \centering
    \setlength\tabcolsep{4pt} % Adjust the column spacing
    \begin{tabular}{|c||c|c|c|c|c|c|c|} 
        \hline
        \textbf{ } & \textbf{r=0.01} & \textbf{r=1.5} & \textbf{r=5} & \textbf{r=10}  \\
        \hline \hline
        DR-LQR &\textbf{17.2} & \textbf{153.2} & \textbf{1024} & \textbf{3635.6}  \\
        \hline
        \textbf{RA(1)} & 17.2 &5789.1 & 6214.6 & 33262 \\
        \hline
        \textbf{RA(2)} & \textbf{17.19}   & \textbf{153.2}&\textbf{1024.1}& \textbf{3645.8} 
  \\
        \hline
    \end{tabular}
        \caption{The worst-case expected LQR cost of the non-rational DR-LQR controller, compared to the rational controllers RA(1), and RA(2), obtained from degree 1, and 2 rational approximations to $N(e^{j\omega})$.}
    %coloring means numerical closeness
    \label{table:worstregret}
\end{table}

Finally, in figure \ref{fig:cvg}, we plot the convergence ratio defined as:
\begin{equation}\label{eq:ratio}
    \texttt{Convergence Ratio}\defeq\frac{W_2(\mathcal{M}_1^{i+1},\mathcal{M}_2^{i+1})}{W_2(\mathcal{M}_1^{i},\mathcal{M}_2^{i})}
\end{equation} where $W_2(\mathcal{M}_1^{i},\mathcal{M}_2^{i})$ represents the Wasserstein-2 distance between $\mathcal{M}_1$ and $\mathcal{M}_2$ at iteration $i$. The plot shows that the iterates $\{\mathcal{M}_1^{i}\}_{i\geq 0}$ converge and the rate of convergence of the \FixAlg is exponential.

\begin{figure}[htbp]
	\centering		 
 \includegraphics[width=0.42\textwidth]{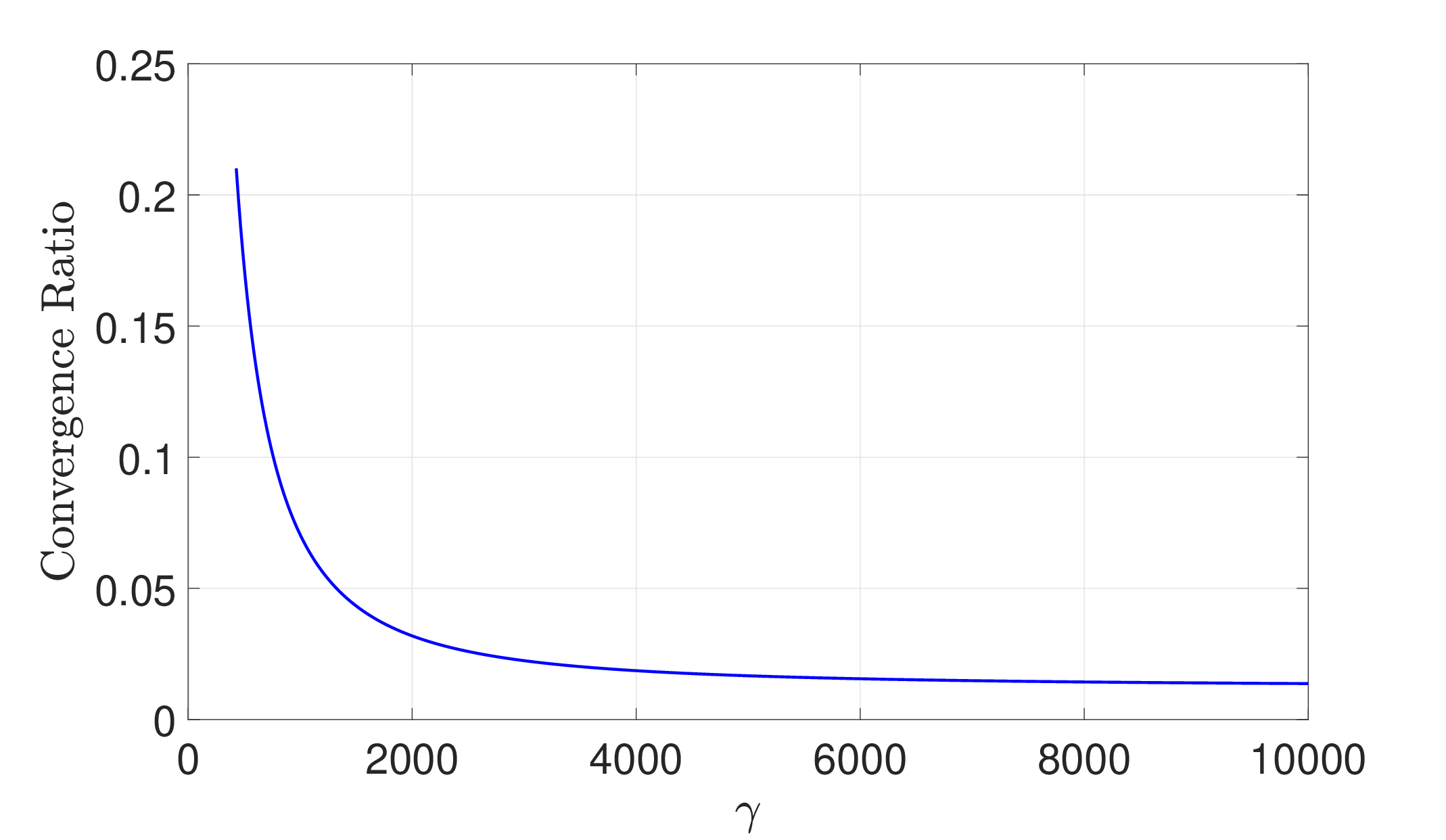}
       \caption{Convergence ratio \eqref{eq:ratio} for different values of $\gamma$. The \FixAlg algorithm converges at an exponential rate. 
        }
		\label{fig:cvg}
\end{figure}
%\vspace{-3mm}
\subsection{Time Domain Evaluations}

We compare the performance of the infinite horizon DR-LQR controller to its finite horizon counterpart, \emph{across time}. 

The finite-horizon DR-LQR, presented in \cite{cornell_drro_old}, is computed through an SDP whose dimension scales with the time horizon. We graph the mean LQR cost across 210 time steps, consolidating data from 1000 separate trials.%\ref{fig:time_domain}
The performance of DR controllers under white Gaussian noise, and under worst-case disturbances of the infinite-horizon and the finite horizon DR controllers, is shown in figures \ref{fig:figa}, \ref{fig:figb} and \ref{fig:figc}, respectively.
For the sake of computational efficacy, the finite horizon controller operates within a constrained time horizon of $s=30$ steps, being recurrently applied every $s$ steps. Likewise, the worst-case disturbances used in figures \ref{fig:figb},\ref{fig:figc} are generated at the same periodicity, resulting in correlated disturbances solely within each $s$ step interval.

Our investigations underscore the unparalleled performance of the infinite horizon DR-LQR controller across all three scenarios. Note that attempting to extend the horizon of the SDP for prolonged durations to approximate the infinite horizon performance proves to be excessively computationally intensive. These findings accentuate the inherent advantages of adopting the infinite horizon DR-LQR controller.

\begin{figure}[htbp]
    \centering 
    %\vspace{-14mm}
    \begin{subfigure}[b]{0.9\columnwidth}
        \centering
        \includegraphics[width=\columnwidth]{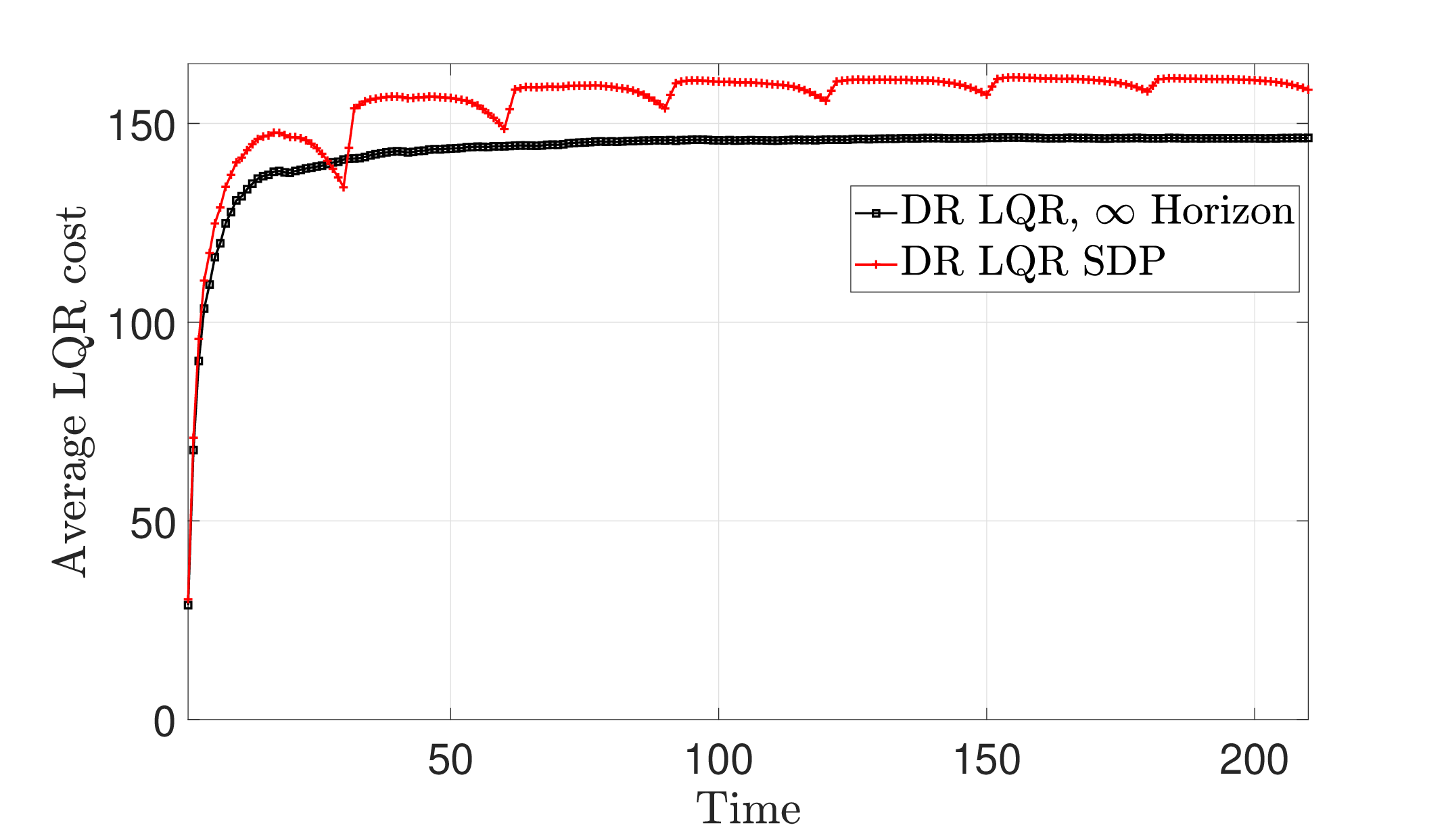}
        \caption{Control costs of DR-LQR controllers in the infinite (I) and finite horizon (II) under white noise.}
        \label{fig:figa}
    \end{subfigure}
    \vfill
    \begin{subfigure}[b]{0.85\columnwidth}
        \centering
        \includegraphics[width=\columnwidth]{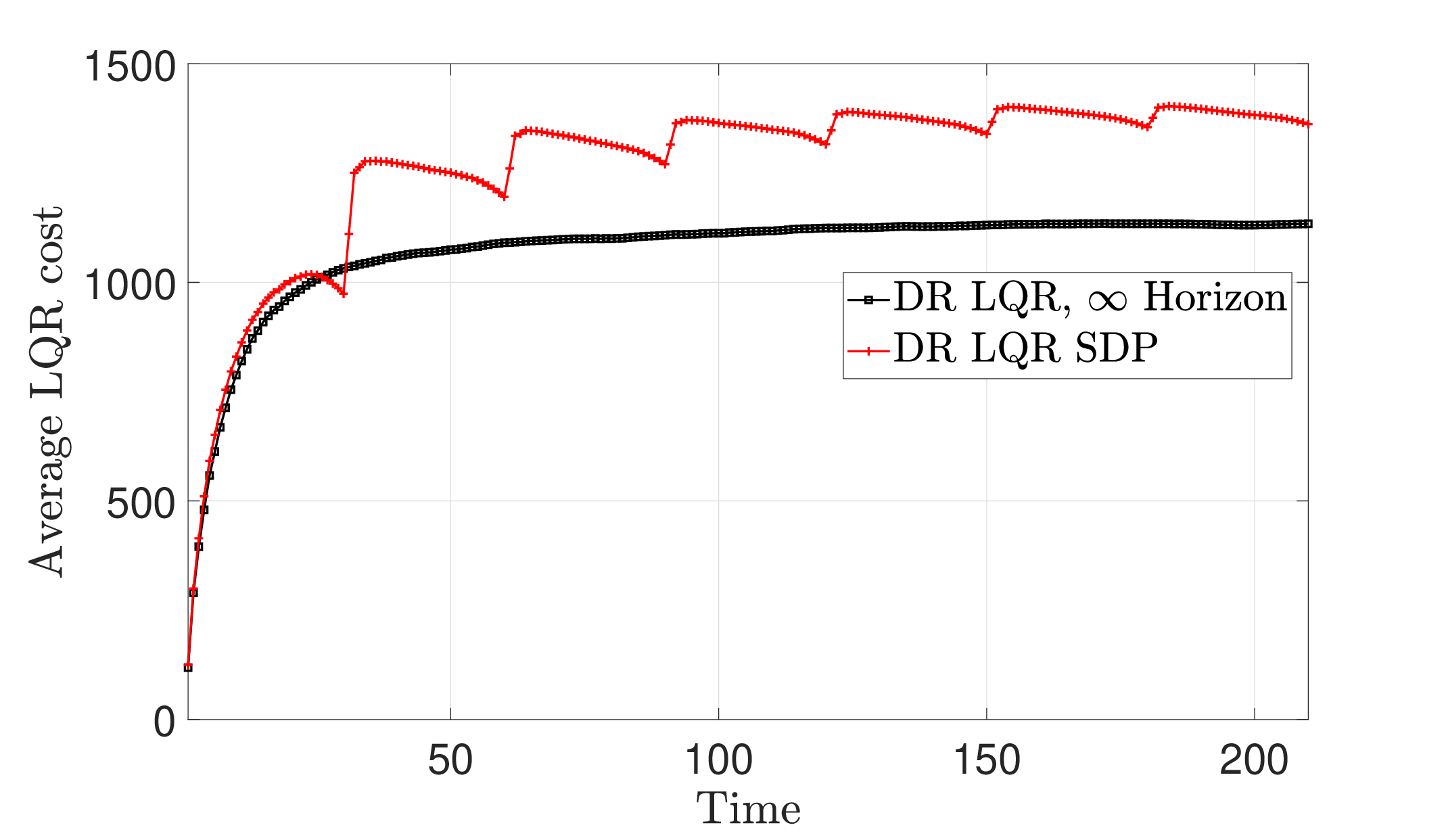}
        \caption{Control costs of DR-LQR controllers in the infinite (I) and finite horizon (II), under the worst-case disturbance for (I).}
        \label{fig:figb}
    \end{subfigure}
    \vfill
    \begin{subfigure}[b]{0.85\columnwidth}
        \centering
        \includegraphics[width=\columnwidth]{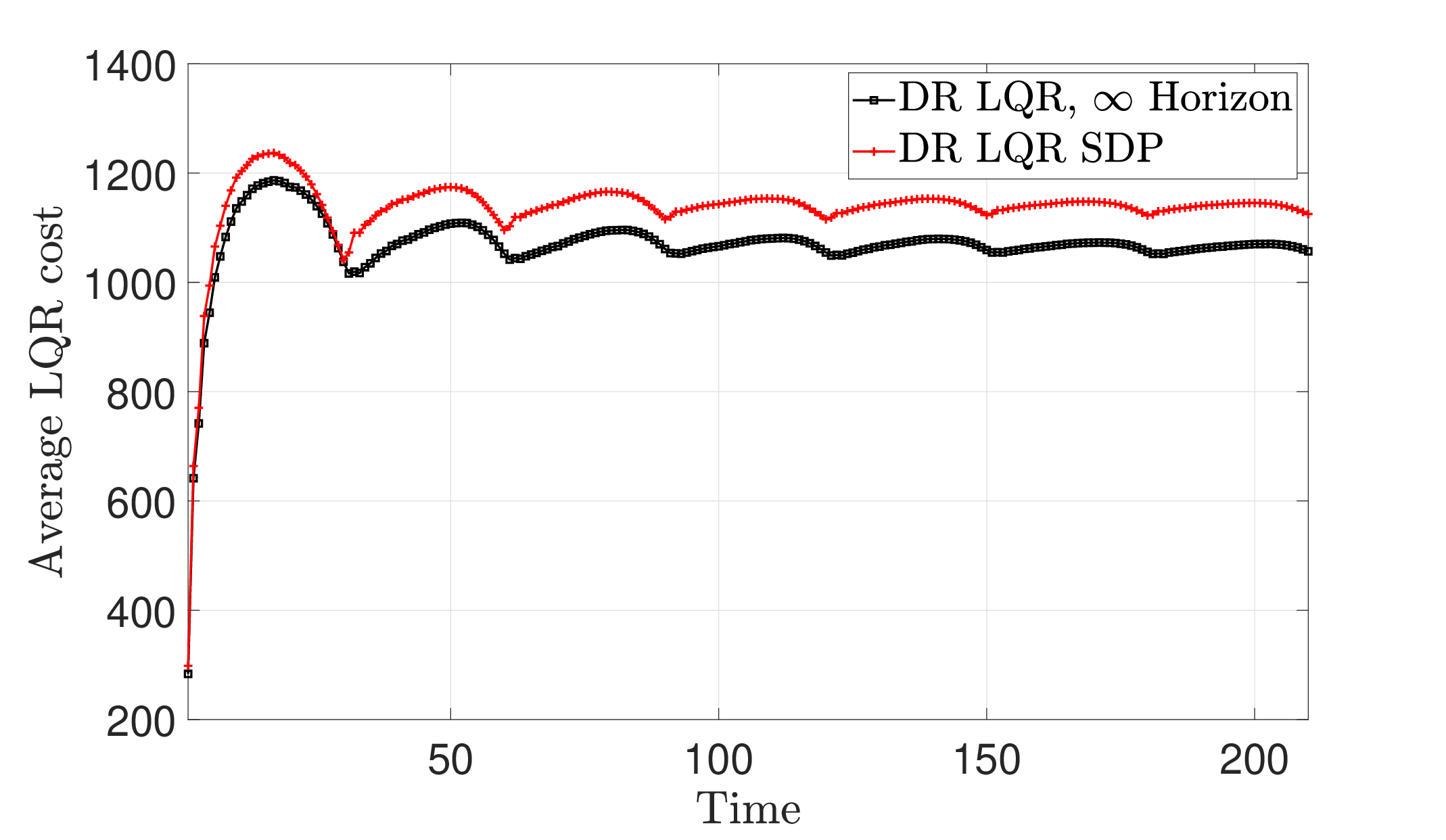}
        \caption{Control costs of DR-LQR controllers in the infinite (I) and finite horizon (II), under the worst-case disturbances for (II).}
        \label{fig:figc}
    \end{subfigure}
    \caption{Control costs of DR-LQR controllers in the infinite (I) and finite horizon (II) which is solved using SDP. The finite-horizon controller is recurrently applied at intervals of $s=30$ steps, and the radius of its ball is $r=1.5\sqrt{s}$, while the radius of (I) is $r=1.5$. The infinite horizon DR-LQR controller outperforms its finite-horizon counterpart and attains the minimum average cost in all cases, even when the finite horizon DR-LQR is designed to minimize the cost.}
    %\label{fig:combined}      
\end{figure}

\section{Future Work}\label{sec:conclusion}
%Our study proposes future research to prove the convergence of the fixed-point algorithm for non-scalar systems, generalize the rational approximation method for non-scalar disturbances, and explore partially observable systems.

%Instead of the continuous domain unit circle $\TT$, we  consider its uniform discretization by $N$ points, $\TT_N \!\defeq\! \{\e^{j 2\pi n /N} \mid n\=0,\!\dots\!,N\-1\}$. 

Our study proposes several overarching directions for future investigation. Firstly, we aim to prove the convergence of the fixed-point algorithm for the general case of non-scalar systems. Moreover, we plan to generalize the rational approximation method for cases where the disturbance is not scalar. Additionally, we seek to expand our findings and study partially observable systems (measurement-feedback case). 

\FloatBarrier

\bibliographystyle{IEEEtran}
\bibliography{references}

%\begin{center}
%{\huge Appendix}
%\end{center}
\vspace{-3mm}
\appendices
\onecolumn
\section{Definitions} \label{ap:org}
 \subsection{Parameters Definitions}\label{app1:def1} 
 %\vspace{-1mm}
 We define $\overline{A},\overline{D}$ and $\overline{C}$ as:
 $\overline{A}\defeq A^\ast_K$, $\overline{D}\defeq A^\ast_K P B_w$, and $\overline{C}\defeq -{(R + B_u^\ast P B_u)}^{-\ast/2}B^\ast_u$ where: i)$A_K$ is the closed loop matrix $A_K\defeq A\-B_u K_{\textrm{lqr}}$, ii) $K_{\textrm{lqr}}$ is the LQR controller $K_{\textrm{lqr}}\!\defeq\!{(R\+B_u^\ast PB_u)}^{\inv}B^\ast_u P A$ and iii) $P \!\succ \!0$ is the unique stabilizing solution to the LQR Riccati equation $P=Q+A^\ast PA-A^\ast PB_u{(R+B_u^\ast PB_u)}^{-1}B_u^\ast PA$.% and iv) $R$ and $Q$ as in \eqref{eq:cost}.

\subsection{Equations for the rational controller}\label{app:controller}
%\vspace{-1mm}
We give the equations for $\Tilde{F},\Tilde{G},\Tilde{H}$ and $\Tilde{J}$ of lemma \ref{lemma:rationalK}.

$\Tilde{F}=\begin{bmatrix}\Tilde{A_K} &0\\ B_u \Bar{R}^\ast \Bar{R} B_u^\ast & A_k\end{bmatrix}$,

$\Tilde{G}=\begin{bmatrix}\Tilde{A_K}\Tilde{B}\\-B_w+B_u\Bar{R}^\ast \Bar{R}B_u^\ast(P B_w+U \Tilde{B}) \end{bmatrix}$,

$\Tilde{H}=-R^{1/2}(\begin{bmatrix}
            \Bar{R}^\ast \Bar{R} B_u^\ast&-K_{lqr}
        \end{bmatrix})$,
        
$\Tilde{J}=-R^{1/2}(\Bar{R}^\ast \Bar{R} B_u^\ast (PB_w+U \Tilde{B})$.

Here, i) $K_{lqr}$, $A_K$ and $P$ are as defined in Appendix \ref{app1:def1}, ii) $\Bar{R}=(R+B_u^\ast P B_u)^{-\ast/2}$ %and $R$ as in \eqref{eq:cost}, 
iii) $\Tilde{A}_k=\Tilde{A}-\Tilde{B}\Tilde{C}$ where $\Tilde{A},\Tilde{B},\Tilde{C}$ as in lemma \ref{lemma:rationalK} and iv) $U$ satisfies the lyapunov equation $A_k^\ast P B_w \Tilde{C }+ A_k^\ast U A=U$.

%\newpage
%\onecolumn

%%%%%%%%%%%%%%%%%%%%%%%%%%%%%%%%%%%%%%%%%%%%%%%%
\section{Proof of Optimality Theorem}\label{ap:pfopt}
\subsection*{Proof of Theorem \ref{thm:kkt}:} 
The $\gamma$-optimal LQR problem in \eqref{eq:suboptimal_prob} results from fixing $\gamma$ in \eqref{eq:full_optimality}.

The proof of the equations satisfied by the unique saddle point $(\mathcal{K}_{\gamma},\mathcal{L}_{\gamma})$ is as follows. %Note that for easiness of notation, we drop the subscript $\star$ from $(\mathcal{K}_{\gamma,\star},\mathcal{L}_{\gamma,\star})$.

First, we consider the following lemma:

    \begin{lemma}[{Wiener-Hopf Technique \cite{kailath_linear_2000}}]
    Consider the problem of approximating a non causal controller $\K_\circ$ by a causal controller $\K$, such that $\K$ minimises the cost $\Tr(\T_{\K}^\ast \T_{\K} \M )$, i.e.,
    \begin{align}
        \inf_{\K \in\causal} \Tr(\T_{\K}^\ast \T_{\K} \M )
    \end{align}
    where $\M \psdg 0$, $\T_{\K}^\ast \T_{\K}= \left(\K - \K_\circ \right)^{\ast}\Delta^{\ast}\Delta\left(\K - \K_\circ \right)+\T_{\K_\circ}^\ast \T_{\K_\circ}$, and $\K_\circ$ is the non-causal controller that makes the objective defined above zero. Then, the solution to this problem is given by 
    \begin{align}
        \K = \Delta^\inv\cl{\Delta \K_\circ \L}_{\!+} \L^\inv,
        \label{eq::nehari_controller}
    \end{align}
    where $\L$ is the unique causal and causally invertible spectral factor of $\M$ such that $\M = \L \L^\ast$ and $\cl{ \cdot }_{\!+}$ denotes the causal part of an operator.
    \label{lemma::nehari}
\end{lemma}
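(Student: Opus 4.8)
The plan is to reduce this constrained trace-minimization to a classical orthogonal-projection problem in the Hilbert--Schmidt geometry, for which the minimizer is precisely the causal truncation $\{\cdot\}_{+}$. First I would substitute the stated decomposition $\T_{\K}^\ast\T_{\K} = (\K-\K_\circ)^\ast\Delta^\ast\Delta(\K-\K_\circ) + \T_{\K_\circ}^\ast\T_{\K_\circ}$ into the objective. The term $\Tr(\T_{\K_\circ}^\ast\T_{\K_\circ}\M)$ is independent of $\K$, so minimizing $\Tr(\T_{\K}^\ast\T_{\K}\M)$ over $\K\in\causal$ is equivalent to minimizing $\Tr\big((\K-\K_\circ)^\ast\Delta^\ast\Delta(\K-\K_\circ)\M\big)$. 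Using the spectral factorization $\M=\L\L^\ast$ with $\L$ causal and causally invertible, together with the cyclic property of the (per-unit-time) trace, this equals $\Tr\big(\L^\ast(\K-\K_\circ)^\ast\Delta^\ast\Delta(\K-\K_\circ)\L\big) = \Norm[2]{\Delta(\K-\K_\circ)\L}^2 = \Norm[2]{\Delta\K\L - \Delta\K_\circ\L}^2$, an $\Htwo$ (Frobenius) distance.

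Next I would change variables to $\clf{Y}\defeq\Delta\K\L$. Since $\Delta$ is the causal spectral factor of $\I+\F^\ast\F$ (hence causal with causal inverse) and $\L$ is causal and causally invertible, the map $\K\mapsto\Delta\K\L$ is a bijection of the space of causal, time-invariant operators onto itself: $\Delta\K\L$ is causal whenever $\K$ is, and conversely $\K=\Delta^\inv\clf{Y}\L^\inv$ is causal whenever $\clf{Y}$ is. The problem thus becomes $\inf_{\clf{Y}\text{ causal}}\Norm[2]{\clf{Y}-\Delta\K_\circ\L}^2$. Now I would invoke the orthogonal decomposition of Hilbert--Schmidt operators into their causal part and their strictly anti-causal part: these subspaces are orthogonal complements under $\inner{\clf{X}}{\clf{Z}}=\Tr(\clf{X}^\ast\clf{Z})$, so the unique minimizer is the orthogonal projection of $\Delta\K_\circ\L$ onto the causal subspace, i.e. $\clf{Y}^\star=\{\Delta\K_\circ\L\}_{+}$. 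Undoing the substitution yields $\K^\star=\Delta^\inv\{\Delta\K_\circ\L\}_{+}\L^\inv$, which is \eqref{eq::nehari_controller}; uniqueness is immediate because $\Norm[2]{\cdot}^2$ is strictly convex and the change of variables is an affine bijection.

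The main points requiring care are the infinite-horizon technicalities rather than the algebra. First, $\Tr$ and $\Norm[2]{\cdot}$ must be read in the time-averaged (frequency-integral) sense, $\Tr(\cdot)=\frac{1}{2\pi}\int_{0}^{2\pi}\tr(\cdot)\,d\omega$, so that all the quantities above are finite and the cyclic-trace manipulation is legitimate for bi-infinite block-Toeplitz operators. Second, one needs $\M\psdg0$ so that $\L$ is genuinely causally invertible; this is what makes the change of variables surjective onto the causal operators and guarantees the projection lands inside the feasible set rather than merely in its closure. Both are standard in the Wiener--Hopf construction, so once these conventions are fixed the argument is the classical one, and I would simply state them and cite \cite{kailath_linear_2000} for the projection step.
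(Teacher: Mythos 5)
Your proposal is correct, and it is essentially the argument the paper relies on: the paper does not prove this lemma itself but defers to \cite{kailath_linear_2000}, and your completion-of-squares reduction to $\Norm[2]{\Delta\K\L-\Delta\K_\circ\L}^2$ followed by orthogonal projection of $\Delta\K_\circ\L$ onto the causal subspace (legitimate because $\Delta,\Delta^\inv,\L,\L^\inv$ are all causal, so $\K\mapsto\Delta\K\L$ bijects causal operators onto causal operators) is exactly the classical Wiener--Hopf proof behind that citation. Your added remarks on the time-averaged trace and on $\M\psdg 0$ ensuring causal invertibility of $\L$ are the right technical caveats and match the paper's conventions.
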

This lemma, whose proof is originally found in \cite{kailath_linear_2000}, serves as the proof of condition \eqref{eq:K from L in KKT} in our theorem \ref{thm:kkt}.

Second, we reformulate problem \eqref{eq:suboptimal_prob} as:
$$ \sup_{\M \psdg 0 }   \Tr(2\sqrt{\M}-\M) +\inf_{\K \in\causal}\gamma^\inv\Tr(\T_{\K}^\ast \T_{\K} \M ),$$

which is convex in $\mathcal{M}$. Hence, the optimal solution $\mathcal{M}_{\gamma}$ satisfies the following KKT conditions as in theorem 9 of \cite{kargin2023wasserstein}
\begin{align}
     \M_{\gamma}^{-\half} -\I + \gamma^\inv \T_{\K_\gamma}^\ast \T_{\K_\gamma} = 0.
    \label{eq::KKT_optimal_M}
\end{align}
Substituting the optimal controller $\K_{\gamma}$ with its equation \eqref{eq::nehari_controller}, and using $\T_{\K_{\gamma}}^\ast \T_{\K_{\gamma}}= \left(\K_{\gamma} - \K_\circ \right)^{\ast}\Delta^{\ast}\Delta\left(\K_{\gamma} - \K_\circ \right)+\T_{\K_\circ}^\ast \T_{\K_\circ}$ from Lemma \ref{lemma::nehari}, and using $\M_{\gamma}=\L_{\gamma}\L_{\gamma}^\ast$, we get,
\begin{align}
    -\I + (\L_{\gamma} \L_{\gamma}^\ast)^{-\half} + \gamma^\inv  \L_{\gamma}^{\!-\ast} \{\Delta\K_{\circ}\L_{\gamma}\}_{\!-}^\ast  \{\Delta\K_{\circ}\L_{\gamma}\}_{\!-} \L_{\gamma}^\inv + \gamma^\inv\T_{\K_\circ}^\ast \T_{\K_\circ}= 0.
\end{align}
Multiplying by $\L^\ast$ from the left and $\L$ from the right, we get,
\begin{align}
      &-\L_{\gamma}^\ast \L_{\gamma} + \L_{\gamma}^\ast(\L_{\gamma} \L_{\gamma}^\ast)^{-\half} \L_{\gamma} + \gamma^\inv \L_{\gamma}^\ast \L_{\gamma}^{\!-\ast} \{\Delta\K_{\circ}\L_{\gamma}\}_{\!-}^\ast  \{\Delta\K_{\circ}\L_{\gamma}\}_{\!-} \L_{\gamma}^\inv \L_{\gamma}+ \gamma^\inv(\T_{\K_\circ}\mathcal{L_\gamma})^\ast (\T_{\K_\circ}\mathcal{L_\gamma}) = 0.
\end{align}
which is equivalent to
\begin{align}
      -\L_{\gamma}^\ast \L_{\gamma} + (\L_{\gamma}^\ast \L_{\gamma} )^{\half} + \gamma^\inv \{\Delta\K_{\circ}\L_{\gamma}\}_{\!-}^\ast  \{\Delta\K_{\circ}\L_{\gamma}\}_{\!-}+\gamma^\inv(\T_{\K_\circ}\mathcal{L_\gamma})^\ast (\T_{\K_\circ}\mathcal{L_\gamma}) = 0.
\end{align}
Defining
\begin{align}
    &\mathcal S_{\L_\gamma} \defeq \{\Delta \K_\circ \L_\gamma\}_{\-} \quad \text{and}\quad \mathcal U_{\L_\gamma}\defeq \mathcal T_{\K_\circ}\L_\gamma,
\end{align}
we get through completing the squares:
\begin{align}
      \left( (\L_{\gamma}^\ast \L_{\gamma} )^{\half} - \frac{\I}{2} \right)^2 = \frac{\I}{4} + \gamma^\inv \left(\mathcal S_{\L_\gamma}^\ast \mathcal S_{\L_\gamma} + \mathcal U_{\L_{\gamma}}^\ast \mathcal U_{\L_{\gamma}}\right).
      \label{eq::kkt_square_form}
\end{align}

Taking the unique positive definite square root of the right hand side of \eqref{eq::kkt_square_form} concludes the proof.

% Have to leave now but will finish it shortly.
\section{Proof of Convergence of Fixed Point in the Case of a Scalar System }\label{ap:pfcvg}

\subsection*{Proof of section \ref{sec:cvg}:} 
In this section, we provide a proof of convergence for the special case when $p = d=n = 1$. We first show  the fixed point that the Algorithm \ref{alg:fixed_point} converges to is unique when $\gamma \geq \ghi$. Then, we show that the iterates produce a sequence of monotonically increase spectrums $\NN$. Due to this, the algorithm must converge to the unique fixed point.

Consider the optimality condition \eqref{eq:N}. Note that for $\gamma > \ghi$, $N_\gamma(z)$ in \eqref{eq:N} is well defined. Thus, the map $F_{5,\gamma} \!\circ\! F_4 \!\circ\! F_3 \!\circ\! F_{2}\!\circ\!F_1 \!\circ\!F_6: N_\gamma(z) \mapsto N_\gamma(z)$ admits a fixed point $N_\gamma(z)$ for a fixed $\gamma$. Since \eqref{eq:suboptimal_prob} is concave in $\M$, the optimal solution $\NN^{*}$ is unique. Given that $M_\gamma(z)=L_\gamma(z)L_\gamma^\ast(z)$ (and $N_\gamma(z)= L_\gamma^\ast(z) L_\gamma(z)$), where $L_\gamma(z)$ represents a spectral factor of $M_\gamma(z)$ ($N_\gamma(z)$) that is both causal and causally invertible, it follows that $L_\gamma(z)$ is uniquely determined apart from a unitary transformation. By establishing a specific choice for the unitary transformation during the spectral factorization process, for example, opting for positive-definite factors at infinity as outlined by \cite{ephremidze_algorithmization_2018}, we ensure the uniqueness of $L_\gamma(z)$, and thus of $N_\gamma(z)$.

Let $\gamma > \ghi$. Consider now two spectrum $\NN_1 \preccurlyeq \NN_2$ which, in the frequency domain, are represented as $N_1(z) \leq N_2(z) \forall z$. Now, the spectrum $\NN_1, \NN_2$ are passed through one iteration of Algorithm \ref{alg:fixed_point}, \ie, $F_{2,\gamma} \!\circ\! (F_1\times \text{id}) \!\circ\! F_3 :\NN \mapsto \bar{\NN}$ to get $\bar{\NN_1}, \bar{\NN_2}$ respectively. We want to show that $\bar{n_1}(z) \leq \bar{n_2}(z) \forall z$ \ie, Algorithm \ref{alg:fixed_point} preserves the order of $\NN_1 \preccurlyeq \NN_2$. We have that,
\begin{align}
    \bar{N_1}(z) &=\frac{1}{4}\left(I \+ \sqrt{I \+4 \gamma^\inv \left(\{\Delta \K_{\circ}\L_{1}\}_{\!-}^\ast(z)  \{\Delta \K_{\circ}\L_{1}\}_{\!-}(z)+ L_{1}^\ast(z) T_{K_\circ}^\ast(z)  T_{K_\circ}(z) L_{1}(z)\right)}\right)^2 \label{eq:n1} \\    
    \bar{N_2}(z) &=\frac{1}{4}\left(I \+ \sqrt{I \+4 \gamma^\inv \left(\{\Delta \K_{\circ}\L_{2}\}_{\!-}^\ast(z)  \{\Delta \K_{\circ}\L_{2}\}_{\!-}(z)+ L_{2}^\ast(z) T_{K_\circ}^\ast(z)  T_{K_\circ}(z) L_{2}(z)\right)}\right)^2\label{eq:n2}
\end{align}

and,
\begin{align}
\Tr &\left(\{\Delta\K_{\circ}\L_{1}\}^{\ast}_{\!-}\{\Delta\K_{\circ}\L_{1}\}_{\!-} + \L_{1}^{\ast}\T_{\K_{\circ}}^{\ast}\T_{\K_{\circ}}\L_{1}\right) \nonumber \\
&= \inf_{\textrm{causal }\K } \Tr(\T_\K^\ast \T_\K \NN_1) \\
&\leq \inf_{\textrm{causal }\K } \Tr(\T_\K^\ast \T_\K \NN_2) \label{eq:n1leqn2} \\
&= \Tr \left(\{\Delta\K_{\circ}\L_{2}\}^{\ast}_{\!-}\{\Delta\K_{\circ}\L_{2}\}_{\!-} + \L_{2}^{\ast}\T_{\K_{\circ}}^{\ast}\T_{\K_{\circ}}\L_{2}\right),
\end{align}
where \eqref{eq:n1leqn2} is due to $\NN_1 \preccurlyeq \NN_2$. Moreover, since $\{\Delta\K_{\circ}\L_{2}\}^{\ast}_{\!-}\{\Delta\K_{\circ}\L_{2}\}_{\!-} + \L_{2}^{\ast}\T_{\K_{\circ}}^{\ast}\T_{\K_{\circ}}\L_{2} \succcurlyeq 0$, we have that,
\begin{align}
    \{\Delta\K_{\circ}\L_{1}\}^{\ast}_{\!-}&(z)\{\Delta\K_{\circ}\L_{1}\}_{\!-}(z) \leq \nonumber \{\Delta\K_{\circ}\L_{2}\}^{\ast}_{\!-}(z)\{\Delta\K_{\circ}\L_{2}\}_{\!-}(z) \forall z.
\end{align}
Hence, we have that, $\bar{N_1}(z) \leq \bar{N_2}(z)$ $\forall z$. We can now initialise the algorithm with $L(z) = 0, \forall z$. Since, after each iteration, $\bar{N}(z) \geq 0 \quad \forall z$, we have a monotonically increase sequence of $\bar{N}(z) \quad \forall z$ and the fixed point is unique, Algorithm \ref{alg:fixed_point} converges to the optimal $\bar{N}(z)$.

\textbf{Note} that even though we present a proof of convergence for a special case, empirical evidence suggests (as in section \ref{sec::numerical}) that the algorithm is exponentially convergent with a faster rate of convergence for larger $\gamma$.

\section{Proof of the State-Space Form of the DR-LQR controller }\label{ap:pfstate}

\subsection*{Proof of Lemma \ref{lemma:rationalK}:} 
Given the spectral factor $\Tilde L(z)$ in rational form as \begin{equation}\Tilde L(z)=(I+\Tilde{C} (z I -\Tilde{A})^{-1}\Tilde{B})\Tilde{D}^{1/2},\end{equation} its inverse given by:
\begin{align}\label{eq:linv}
    \Tilde L^{-1}(z)=\Tilde{D}^{-1/2}(I-\Tilde{C}(zI- (\Tilde{A}-\Tilde{B}\Tilde{C}))^{-1}\Tilde{B}),
\end{align}
and its operator form denoted by $\Tilde \L$.

The DR-RO controller, $K(z)$, is written as a sum of causal functions:
\begin{align}
      K(z)&=\Delta^{-1}(z) \{\Delta \K_\circ \Tilde \L\}_{+}(z)\Tilde L^{-1}(z)\\
      &=\Delta^{-1}(z) \left(\{\Delta \K_\circ \}_{+}(z)\Tilde L(z)+ \{\{\Delta \K_\circ \}_{-}\Tilde \L\}_{+}(z)\right)\Tilde L^{-1}(z)\\
      &=\Delta^{-1}(z) \{\Delta \K_\circ \}_{+}(z) + \Delta^{-1} \{\{\Delta \K_\circ \}_{-}\Tilde \L\}_{+}(z)\Tilde L^{-1}(z)\label{eq:last}.
\end{align}
From Lemma 4 in \cite{sabag2023regretoptimal}, we have:
\begin{align}\label{eq:dl}
    \{\Delta \K_\circ\}_{-}(z)=- \Bar{R} B_u^{\ast} (z^{\-1}I-A_k^\ast)^{-1}A_k^\ast P B_w
\end{align}

where all the terms have defined previously in Appendix \ref{app1:def1} and Appendix \ref{app:controller}.

Now, equation \eqref{eq:dl} is multiplied with $\Tilde L$, and the causal part of the result is:
\begin{align}
    \{\{\Delta \K_\circ\}_{-}\Tilde \L\}_{+}(z)= \{- \Bar{R} B_u^\ast (z^{\-1}I-A_k^\ast)^{-1}A_k^\ast P B_w \Tilde{C} (zI-\Tilde{A})^{-1}\Tilde{B}\Tilde{D}^{1/2}- \Bar{R}B_u^\ast (z^{\-1}I-A_k^\ast)^{-1}A_k^\ast P B_w \Tilde{D}^{1/2}  \}_{+}.
\end{align}

Let $\Tilde{U}$ be the matrix which solves the lyapunov equation: $A_k^\ast P B_w \Tilde{C }+ A_k^\ast \Tilde{U} A=\Tilde{U}$. Since $\Bar{R}B_u^\ast (z^{\-1}I-A_k^\ast)^{-1}A_k^\ast P B_w \Tilde{D}^{1/2}$ is strictly anticausal, $\{\{\Delta \K_\circ\}_{-}\Tilde \L\}_{+}(z)$ is as follows:

\begin{align}
    \{\{\Delta \K_\circ\}_{-}\Tilde \L\}_{+}(z)&=  \{- \Bar{R} B_u^\ast ((z^{\-1}I-A_k^\ast)^{-1}A_k^\ast \Tilde{U} + \Tilde{U} \Tilde{A}(zI-\Tilde{A})^{-1}+\Tilde{U})\Tilde{B}\Tilde{D}^{1/2} \}_{+}\\
    &=- \Bar{R} B_u^\ast \Tilde{U} (\Tilde{A}(zI-\Tilde{A})^{-1}+I)\Tilde{B}\Tilde{D}^{1/2}\\
    &=-z \Bar{R} B_u^\ast \Tilde{U} (zI-\Tilde{A})^{-1}\Tilde{B}\Tilde{D}^{1/2} \label{eq:dl_linv}
\end{align}

Now, equation \eqref{eq:dl_linv} is multiplied by the inverse of $\Tilde L$ \eqref{eq:linv}, the result is:
\begin{align}
    \{\{\Delta \K_\circ\}_{-}\Tilde \L\}_{+}(z)\Tilde L^{-1}(z)&=-z \Bar{R}B_u^\ast \Tilde{U} (zI-\Tilde{A})^{-1} \Tilde{B} (I+\Tilde{C}(zI-\Tilde{A})^{-1}\Tilde{B})^{-1}\\
    &=
    -z \Bar{R}B_u^\ast \Tilde{U} (zI-\Tilde{A})^{-1} (I+\Tilde{B}\Tilde{C}(zI-\Tilde{A})^{-1})^{-1}\Tilde{B}\\
    &=-z \Bar{R} B_u^\ast \Tilde{U} (zI-\Tilde{A}_k)^{-1}\Tilde{B}\\
    &=-\Bar{R }B_u^\ast \Tilde{U} (I+(zI-\Tilde{A}_k)^{-1}\Tilde{A}_k)\Tilde{B}
\end{align}
where $\Tilde{A}_k=\Tilde{A}-\Tilde{B}\Tilde{C}$.

The inverse of $\Delta$ is given by $\Delta^{-1}(z)=(I- K_{lqr} (zI-A_k)^{-1}B_u)\Bar{R}^\ast$, and lemma 4 in \cite{sabag2023regretoptimal} states that $\{\Delta \K_\circ\}_{+}(z)=-\Bar{R}B_u^\ast P A (zI-A)^{-1}B_w - \Bar{R} B_u^\ast P B_w$. 

Then the 2 terms of equation \eqref{eq:last} are: 

\begin{equation}\label{eq:1}
    \Delta^{-1}(z)\{\Delta \K_\circ\}_{+}(z)=-K_{lqr}(zI-A_k)^{-1}(B_w-B_u \Bar{R}^\ast \Bar{R}B_u^\ast P B_w)-\Bar{R}^\ast \Bar{R}B_u^\ast P B_w
\end{equation}

and 
\begin{align}\label{eq:2}
    \Delta^{-1}(z)\{\{\Delta \K_\circ\}_{-}\Tilde \L\}_{+}(z)\Tilde L^{-1}(z)&= - ( I-K_{lqr}(zI-A_k)^{-1}B_u )\Bar{R}^\ast \Bar{R}B_u^\ast \Tilde{U} (zI-\Tilde{A}_k)^{-1}\Tilde{A}_k \Tilde{B}\\
    &+ K_{lqr}(zI-{A}_k)^{-1}B_u \Bar{R}^\ast \Bar{R}B_u^\ast \Tilde{U} \Tilde{B}\\
    &-\Bar{R}^\ast \Bar{R}B_u^\ast \Tilde{U} \Tilde{B}
\end{align}

Finally, equations \eqref{eq:1} and \eqref{eq:2} are summed, and the resulting rational controller $K(z)$ is:
\begin{align}\label{eq:Kstatespace}
K(z)&=\underbrace{-\begin{bmatrix}
           \Bar{R}^\ast \Bar{R} B_u^\ast&-K_{lqr}
        \end{bmatrix}}_{\widetilde{H}} (zI-\underbrace{\begin{bmatrix}\Tilde{A_K} &0\\ B_u \Bar{R}^\ast \Bar{R} B_u^\ast & A_k\end{bmatrix}}_{\widetilde{F}})^{-1} \underbrace{\begin{bmatrix}\Tilde{A_K}\Tilde{B}\\-B_w+B_u\Bar{R}^\ast \Bar{R}B_u^\ast(P B_w+U_1 \Tilde{B}) \end{bmatrix}}_{\widetilde{G}} \nonumber \\
        &\underbrace{- \Bar{R}^\ast \Bar{R} B_u^\ast (PB_w+U_1 \Tilde{B})}_{\widetilde{J}}
\end{align}
which can be explicitly rewritten as in \eqref{eq:RationalK2}.

\newpage
\section{Implementation of Spectral Factorization via DFT}\label{ap:spectral factorization}

To perform the spectral factorization of an irrational function $N(z)$, we use a spectral factorization method via discrete Fourier transform, which returns samples of the spectral factor on the unit circle. The method is efficient without requiring rational spectra, and the associated error term, featuring a purely imaginary logarithm, rapidly diminishes with an increased number of samples. Note that this method is explicitly designed for scalar functions.

\begin{algorithm}[htbp]\label{alg:spectral factor method}
   \caption{\texttt{SpectralFactor}}
\begin{algorithmic}[1]
   \STATE {\bfseries Input:} Scalar positive spectrum $N(z)>0$ on  $\mathbb T_N \defeq \{\e^{j 2\pi n /N} \mid n \!=\! 0,\dots,N\!-\!1\}$ 
   \vspace{0.5mm}
   \STATE {\bfseries Output:} Causal spectral factor $L(z)$ of $N(z)>0$ on  $\mathbb T_N$ 
   \vspace{0.5mm}
   \STATE Compute the cepstrum $\begin{aligned}\Lambda(z) \gets \log(N(z))\end{aligned}$ on $z\in \mathbb T_N$.
   \vspace{0.5mm}
   \STATE Compute the inverse DFT  \\
    \vspace{0.5mm}
    $\begin{aligned}\lambda_k \gets \operatorname{IDFT}(\Lambda(z))\end{aligned}$ for $k=0,\dots,N\!-\!1$
   \vspace{0.5mm}
   \STATE Compute the spectral factor for $z_n= \e^{j 2\pi n /N}$ \\
    \vspace{0.5mm}
   $\begin{aligned}L(z_n) \gets \exp\pr{\frac{1}{2} \lambda_0 + \sum_{k=1}^{N/2-1} \lambda_k z_n^{-k} + \frac{1}{2} (-1)^{n} \lambda_{N/2}} \end{aligned}$, \quad $n=0,\dots,N\!-\!1$
\end{algorithmic}
\end{algorithm}

\end{document}